\pgfplotsset{compat=1.12} 
\def\notshow#1\notshowend{} %
\def\bb#1\eb{\textcolor{blue}{#1}}
\def\br#1\er{\textcolor{red}{#1}} %
\def\bm#1\em{\textcolor{purple}{#1}} %
\newtheorem{thm}{Theorem}[section]
\newtheorem{prop}[thm]{Proposition}
\newtheorem{lemma}[thm]{Lemma}
\theoremstyle{definition}
\newtheorem{defi}[thm]{Definition}
\newtheorem{notation}[thm]{Notation}
\newtheorem{exe}[thm]{Example}
\newtheorem{rem}[thm]{Remark}
\newtheorem*{theorem*}{Proposition}
\newcommand{\ben}{\begin{enumerate}}
\newcommand{\een}{\end{enumerate}}
\newcommand{\bit}{\begin{itemize}}
\newcommand{\eit}{\end{itemize}}
\newcommand{\edoc}{\end{document}}
\title[Snell's law revisited]{Snell's law revisited and generalized \\ via Finsler Geometry}
\author[S. Markvorsen]{Steen Markvorsen}\address{DTU Compute, Mathematics, \hfill\break\indent Technical University of Denmark, \hfill\break\indent DK-2800 Kgs. Lyngby, Denmark.}\email{stema@dtu.dk}
\author[E. Pend\'{a}s-Recondo]{Enrique Pend\'{a}s-Recondo}\address{Departamento de Matem\'{a}ticas, \hfill\break\indent Universidad de Murcia, \hfill\break\indent Campus de Espinardo, \hfill\break\indent 30100 Espinardo, Murcia, Spain.}\email{e.pendasrecondo@um.es}
\subjclass{53B40, 53C60, 53C22, 53Z05, 78M30}
\begin{document}

\begin{abstract}
We study the variational problem of finding the fastest path between two points that belong to different anisotropic media, each with a prescribed speed profile and a common interface. The optimal curves are Finsler geodesics that are refracted -- broken -- as they pass through the interface, due to the discontinuity of their velocities. This ``breaking'' must satisfy a specific condition in terms of the Finsler metrics defined by the speed profiles, thus establishing the generalized Snell's law. In the same way, optimal paths bouncing off the interface -- without crossing into the second domain -- provide the generalized law of reflection. The classical Snell's and reflection laws are recovered in this setting when the velocities are isotropic. If one considers a wave that propagates in all directions from a given ignition point, the trajectories that globally minimize the traveltime generate the wavefront at each instant of time. We study in detail the global properties of such wavefronts in the Euclidean plane with anisotropic speed profiles. Like the individual rays, they break when they encounter the discontinuity interface. But they are also broken due to the formation of cut loci -- stemming from the self-intersection of the wavefronts -- which typically appear when they approach a high-speed profile domain from a low-speed profile.

\vspace{10mm}

\noindent {\em Keywords}: Snell's law, Fermat's principle, Huygens' principle, Zermelo navigation, Finsler metrics, anisotropic discontinuous media, rays with least traveltime, wave propagation.
\end{abstract}

\maketitle

\pagebreak

\tableofcontents

\section{Introduction}

Snell's law is -- in physical terms -- concerned with the refraction of light which in space passes from a point $ q_{1} $  to another point $ q_{2} $ through an interface between two (usually isotropic) media $ Q_{1} $ and $ Q_{2} $ with different refraction indices $ n_{1} $ and $ n_{2} $, say. In its simplest form the law expresses the break of the light path at the interface in terms of the incoming angle, the outgoing angle, and the ratio between the two refraction indices. The light is supposed to have a constant speed $ c/n_{1} $ in all directions in $ Q_{1} $ and similarly $ c/n_{2} $ in $ Q_{2} $ (where $ c $ denotes the speed of light in vacuum). For convenience of the reader, this classical formula is presented and discussed in Examples~\ref{ex:classical_snell} and \ref{exe:Classic2} below -- in the more general framework of the present paper.

Snell's law is classically derived from Fermat's principle, which states that the path of the light is the one which takes the least time to go from $ q_{1} $ to $ q_{2} $. This principle will also be our starting point. It motivates directly the variational approach to the task of finding optimal traveltimes -- also through the interface of \emph{anisotropic} inhomogeneous domains which is the main concern of the present paper. The anisotropy is here given by having different prescribed speed profiles $ V_{1} $ and $ V_{2} $ in the two domains. These speed profiles, that we now allow to depend on both the position and the \emph{direction} of the travel path, are considered in a given Riemannian manifold $ (Q, g) $ which thus defines the background for the domains $ Q_{1} \subset Q $, $ Q_{2} \subset Q $,  their interface $ \eta \subset Q $ as well as the space of paths under consideration.

We then show how the traveltime through the interface $ \eta $ in this general situation is minimized by broken geodesics of two Finsler metrics -- one on each domain -- which are determined by the ambient metric $g$ and the prescribed speed profiles $ V_{i} $. And we show and illustrate how the ``breaking'' at the interface must satisfy a specific condition on these Finsler metrics, i.e. the generalized Snell's law. We note that the classical Snell's law (with isotropic constant speed profiles   $ c/n_{i} $ in $ (Q,g) $) of course also follows from the general version -- but then with  simpler Finsler metrics which are just the conformally modified background metrics $ \rho^{2}_{i}\cdot g $, where the conformal factors are $ \rho_{i} = n_{i}/c $.

Once the full set of relevant (possibly broken) time-parametrized Finsler geodesics issuing from one point $ q_{1} $ has been found, it gives rise to a foliation of the domain $ Q $ by the wavefronts, each consisting of those points which have the same traveltime from $ q_{1} $ -- assuming that the geodesic equations have solutions for sufficiently large time. Some geodesics are not globally time-minimizing, so they do not contribute to the wavefront. This means that they have encountered a cut locus on their way from $ q_{1} $. We discuss in detail, and illustrate via a simple example (with constant speed profiles  $ V_{i} $ in $ \mathds{R}^{2} $), how such cut loci typically appear in situations where $ V_{2} $ is ``larger'' than $ V_{1} $. In this way -- already in this example -- the creation of cut loci follows from the discontinuous speed profile in $ Q $ notwithstanding the fact that there is neither curvature nor structural topology present in neither $ Q_{1} $ nor $ Q_{2} $.

\subsection*{Outline of the paper}
We begin in \S~\ref{sec:initial_concepts} with the equivalence between the time spent when following a curve with a prescribed anisotropic velocity and the length of the curve computed with a certain Finsler metric. As alluded to above, when the velocity changes discontinuously from one medium to another, we have a situation that generalizes the classical Snell's law. In \S~\ref{sec:time_min} we apply calculus of variations to find the critical points of the traveltime functional, i.e. the refracted trajectories, thus arriving at the generalized Snell's law \eqref{eq:snell}. The precise results are summarized in Theorem~\ref{th:refraction}. Following an analogous procedure, we derive in \S~\ref{sec:reflection} a similar result, Theorem~\ref{th:reflection}, for the reflected trajectories, including the generalized law of reflection \eqref{eq:reflection}. Finally, in \S~\ref{sec:R2} we reduce the setting to $ \mathds{R}^2 $ with constant Finsler metrics (i.e. Minkowski norms). This allows us to address some global issues, such as the existence and uniqueness of solutions to Snell's law and the law of reflection, which are formulated in Theorems~\ref{th:existence_refraction} and \ref{th:existence_reflection}, as well as the study of the global minima of the traveltime functional. When considering rays that are shot in all directions from a given point of ignition, the trajectories that globally minimize the traveltime are the ones that generate the wavefront at each given time. This phenomenon is described and illustrated in Theorem~\ref{th:wavefront}. When a wavefront overlaps itself (at the intersection of two rays) it forms the cut locus, which is studied and characterized for a specific example in \S~\ref{subsec:cutlocus}.

\section{Initial concepts}
\label{sec:initial_concepts}
\subsection{Traveltime and Finsler metrics}
Let $ (Q,g) $ be a smooth Riemannian manifold of dimension $ n \geq 2 $, and consider also its tangent bundle $ TQ $. The metric $ g $ will measure actual distances in the space $ Q $, i.e. if $ \gamma: [0,t_0] \rightarrow Q $ is a (piecewise smooth) curve with tangent vector $ \gamma'(t) $, then the distance traveled -- when following this trajectory -- is
\begin{equation}
\label{g_length}
L_g[\gamma] \coloneqq \int_{0}^{t_0} \sqrt{g(\gamma'(t),\gamma'(t))} dt = \int_{0}^{t_0} \|\gamma'(t)\|_{g} \, dt
\end{equation}
(splitting the integral accordingly at any break). This distance, this length, is independent of the parametrization of $ \gamma $.

Assume now that we travel along $ \gamma $ with a prescribed velocity. We will allow this velocity to be anisotropic, i.e. it may depend on the point of the space and also on the direction. This means that there is a function
\begin{equation*}
V: TQ \setminus \textbf{0} \rightarrow (0,\infty)
\end{equation*}
such that $ V(p,v) $ represents a fixed speed at $ p \in Q $ in the direction $ v \in T_pQ \setminus \{0\} $. Since $ V(p,v) $ depends on the (oriented) direction of $ v $ but not on its length, this function must be positively homogeneous of degree zero, i.e. $ V(p,v) = V(p,\lambda v) $ for all $ \lambda > 0 $. Then the time spent traveling along $ \gamma $ with this prescribed velocity is
\begin{equation}
\label{eq:time}
T[\gamma] \coloneqq \int_{0}^{t_0} \frac{\|\gamma'(t)\|_{g}}{V(\gamma(t),\gamma'(t))} \,dt,
\end{equation}
which is again independent of reparametrizations (as long as they preserve the orientation of $ \gamma $) thanks to the zero-homogeneity of $ V $. The preservation of orientation is necessary, since in general we will not assume that  $ V(p,v) = V(p,-v) $. Nevertheless, observe that if $ \gamma $ is parametrized in such a way that its tangent vector always coincides with the velocity given by $ V $, i.e. $ \|\gamma'(t)\|_g = V(\gamma(t),\gamma'(t)) $, then the parameter $ t $ directly measures the time, as $ T[\gamma] = t_0 $. We say in this case that $ \gamma $ is {\em time-parametrized}.

Let us denote by $ F $ the function given by the integrand in \eqref{eq:time}, i.e.
\begin{equation*}
\begin{array}{cccc}
F \colon & TQ & \longrightarrow & [0,\infty) \\
& (p,v) & \longmapsto & F(p,v) := \frac{\|v\|_{g}}{V(p,v)},
\end{array}
\end{equation*}
with $ F(p,0) := 0 $ for all $ p \in Q $. Then $ F $ is a Finsler metric if and only if $ V $ is smooth and the following oval (the indicatrix of $ F $), that it defines at each $ p \in Q $ is a strongly convex hypersurface of $ T_pQ $:\footnote{Using any Euclidean metric in $ T_pQ $, this means that the second fundamental form of $ \Sigma_p $ with respect to the inner normal vector is positive definite or, equivalently, that $ \Sigma_p $ has positive sectional curvature everywhere (see \cite[Prop.~2.3]{JS1}).}
\begin{equation*}
\Sigma_p := \{v \in T_pQ: \|v\|_{g} = V(p,v)\} = \{v \in T_pQ: F(p,v) = 1 \}.
\end{equation*}
Note that the length of $ \gamma $ computed with $ F $ coincides with the traveltime along $ \gamma $ with the prescribed velocity $ V $:
\begin{equation}
\label{F_length}
L_F[\gamma] := \int_{0}^{t_0} F(\gamma(t),\gamma'(t)) dt = T[\gamma].
\end{equation}

\subsection{Snell's law: The general setting}
In the classical Snell's law setting, a light ray travels between two different regions in $ \mathds{R}^2 $ separated by a straight line. Let us generalize this framework by having two open subsets $ Q_1, Q_2 \subset Q $ such that their closures $ \overline{Q}_1, \overline{Q}_2 $ are smooth manifolds with the same smooth boundary $ \eta := \partial Q_1 = \partial Q_2 $ and $ \overline{Q}_1 \cup \overline{Q}_2 = Q $. Assume also that we have two different velocity functions -- speed profiles -- $ V_1, V_2 $ defined on $ TQ_1 $ and $ TQ_2 $, respectively, and let $ F_1, F_2 $ be their corresponding Finsler metrics, so that $ (Q_1,F_1) $ and $ (Q_2,F_2) $ are smooth Finsler manifolds (and $ (Q,g) $ is still a Riemannian one).

Then, if $ \gamma:[0,t_0] \rightarrow Q $ is a curve going from $ \gamma(0) = q_1 \in Q_1 $ to $ \gamma(t_0) = q_2 \in Q_2 $ and crossing $ \eta $ once at $ \gamma(\tau) $, for some $ \tau \in (0,t_0) $, we can still compute the traveltime by
\begin{equation}
\label{eq:traveltime}
T[\gamma] = \int_{0}^{\tau} F_1(\gamma(t),\gamma'(t))dt + \int_{\tau}^{t_0} F_2(\gamma(t),\gamma'(t))dt,
\end{equation}
where, formally, $ \int_{0}^{\tau} := \lim_{h\rightarrow \tau^-} \int_{0}^{h} $ and $ \int_{\tau}^{t_0} := \lim_{h\rightarrow \tau^+} \int_{h}^{t_0} $. To avoid integrability and differentiability issues, we will further assume that $ F_1 $ and $ F_2 $ can be smoothly extended to $ \eta $, so that the integrals always exist and we can also compute $ F_1(\gamma(\tau),\gamma'(\tau^-)) $, $ F_2(\gamma(\tau),\gamma'(\tau^+)) $ and their derivatives.

\begin{defi}
\label{def:traveltime}
Fixing an interval $ [0,t_0] $, some $ \tau \in (0,t_0) $ and two points $ q_1 \in Q_1, q_2 \in Q_2 $, let $ \mathcal{N} $ be the set of all (regular) piecewise smooth curves $ \gamma:[0,t_0] \rightarrow Q $ from $ \gamma(0) = q_1 $ to $ \gamma(t_0) = q_2 $ and crossing $ \eta $ once at $ \gamma(\tau) $. We will call $ T_{\mathcal{N}}: \mathcal{N} \rightarrow \mathds{R} $ defined by \eqref{eq:traveltime} the {\em traveltime functional} on $ \mathcal{N} $.

Note that any curve from $ q_1 $ to $ q_2 $ (piecewise smooth and crossing $ \eta $ once) can be reparametrized so that it belongs to $ \mathcal{N} $, without affecting its traveltime. Therefore, the choice of $ t_0 $ and $ \tau $ is arbitrary and non-restrictive (as well as the choice of $ 0 $ as the initial time).
\end{defi}

Among all the possible paths in $ \mathcal{N} $, light (or any wave, in general) will choose the one that minimizes the traveltime fuctional, according to the classical Fermat's principle. In a smooth Finsler manifold $ (Q,F) $, it is well known that this trajectory (if it exists) must be a geodesic of $ F $ (see e.g. \cite[\S~5]{BCS}). In our setting, we will have an $ F_1 $-geodesic from $ q_1 $ to $ \eta $ (the incident trajectory) and an $ F_2 $-geodesic from $ \eta $ to $ q_2 $ (the refracted trajectory), with a break at the point where the curve meets $ \eta $. The goal is to find the change in the direction at the break point, i.e. the generalized Snell's law.

\begin{rem}
The functions $ V_1 $ and $ V_2 $ can represent not only the propagation velocities of a wave, but also the maximum possible velocities of an object or a person in two different media. In the former case, the problem is related to Fermat's principle (see e.g. \cite{BS,CJM,CJS1,GHWW}) and Huygens' principle (see e.g. \cite{DS,JP,JPS1,M2}), with real-world applications in seismic theory \cite{ABS,YN}, sound waves \cite{GW} or wildfire modeling \cite{D,G,JPS2,M1}, among others. In the latter, it becomes a general version of Zermelo's navigation problem, which seeks the fastest trajectory between two points for a moving object in a medium which, in turn, may also be moving with respect to the observer (see e.g. \cite{CJS2,JS2}). Thus, real-world applications of this generalized Snell's law are multiple and widely varied. Some examples are naturally encountered when studying for example:
\begin{itemize}
\item The trajectories of light rays between two anisotropic media.
\item The firefront of a wildfire that spreads over two different crop or fuel fields.
\item The propagation of seismic waves when passing through layers of different anisotropic inhomogeneous materials.
\item The fastest trajectory between a point on the ground and a point in the sea for a person who first has to run and then swim, taking the water current into account.
\item The fastest trajectory for a travel first by boat and then by zeppeling, taking into account both the water current and the wind.
\end{itemize}
\end{rem}

\section{Time-minimizing trajectories}
\label{sec:time_min}
\subsection{Variational definitions and conventions}
In order to find  time-minimizing trajectories, one looks for the critical points of $ T_{\mathcal{N}} $. Let $ \gamma \in \mathcal{N} $ be our candidate curve, with $ \gamma(0) = q_1 \in Q_1 $, $ \gamma(t_0) = q_2 \in Q_2 $ and $ \gamma(\tau) \in \eta $. Recall from Definition~\ref{def:traveltime} that the choice of $ t_0 $ and $ \tau $ is arbitrary, so we can further assume, without loss of generality, that $ \gamma $ is time-parametrized, i.e.
\begin{equation}
\label{eq:time_par}
\begin{split}
& F_1(\gamma(t),\gamma'(t)) = 1, \qquad \forall t \in [0,\tau), \\
& F_2(\gamma(t),\gamma'(t)) = 1, \qquad \forall t \in (\tau,t_0],
\end{split}
\end{equation}
(so $ F_1(\gamma(\tau),\gamma'(\tau^-)) = 1 $ and $ F_2(\gamma(\tau),\gamma'(\tau^+)) = 1 $). The curve $ \gamma $ will be the base curve in a family of curves defined by a variation, in the following sense.

\begin{defi}
\label{def:variation}
An {\em admissible variation} $ H $ of $ \gamma $ is a continuous map
\begin{equation*}
\begin{array}{cccc}
H \colon & (-\varepsilon,\varepsilon) \times [0,t_0] & \longrightarrow & Q \\
& (\omega,t) & \longmapsto & H(\omega,t) := \gamma_{\omega}(t)
\end{array}
\end{equation*}
such that:
\begin{itemize}
\item $ H $ is smooth on each $ (-\varepsilon,\varepsilon) \times [s_i,s_{i+1}] $, where $ 0 = s_1 < \ldots < s_k = \tau < \ldots < s_{k+l} = t_0 $ is a partition of $ [0,t_0] $.\footnote{For simplicity, we will only focus on the break $ s_k=\tau $ and omit writing the others explicitly, as they are not relevant for our study here.}

\item $ H(\omega,\cdot) = \gamma_{\omega} \in \mathcal{N} $ for all $ \omega \in (-\varepsilon,\varepsilon) $, with $ \gamma_0 = \gamma $. In particular, $ \gamma_{\omega}(0) = q_1 $, $ \gamma_{\omega}(t_0) = q_2 $ and $ \gamma_{\omega}(\tau) \in \eta $.


\end{itemize}

The first condition ensures that the tangent vector of the curve $ \omega \mapsto \gamma_{\omega}(t) $ at $ \omega = 0 $ is always well-defined. Let us denote by $ W(t) \in T_{\gamma(t)}Q $ this tangent vector. Then the curve in $ TQ $ given by
\begin{equation*}
\begin{array}{ccc}
[0,t_0] & \longrightarrow & TQ \\
t & \longmapsto & (\gamma(t),W(t))
\end{array}
\end{equation*}
is called the {\em variational field} of $ \gamma $ associated with $ H $.

Finally, we say that $ \gamma $ is a {\em critical point} of $ T_{\mathcal{N}} $ if, for every admissible variation $ H $, we have that
\begin{equation*}
\left. \frac{d}{d\omega} T_{\mathcal{N}}[\gamma_{\omega}] \right\rvert_{\omega=0} = 0.
\end{equation*}
\end{defi}

\begin{rem}
Note that the only time-parametrized curve in the variation is the base curve $ \gamma = \gamma_{0}$. In particular, the traveltime for $ \gamma $ is $ t_0 $ and $ \tau $ is the time at which $ \gamma $ meets $ \eta $. However, the parametrization of any other curve $ \gamma_{\omega} $ is not arbitrary: it must satisfy the conditions $ \gamma_{\omega}(0) = q_1 $, $ \gamma_{\omega}(t_0) = q_2 $ and $ \gamma_{\omega}(\tau) \in \eta $. So, we cannot assume that $ \gamma_{\omega} $ is time-parametrized for $ \omega \neq 0 $, which means that the traveltime must be computed through \eqref{eq:traveltime} and $ \tau $ does not have to coincide with the actual time at which $ \gamma_{\omega} $ intersects $ \eta $ for $ \omega \neq 0 $.
\end{rem}

\begin{notation}
From now on we will work in coordinates with the following usual conventions. Let $ q = (q^1,\ldots,q^n) $ be a coordinate system on $ Q $ and consider also the coordinate system $ (q,\dot{q}) $ on $ TQ $, being $ \dot{q} = (\dot{q}^1,\ldots,\dot{q}^n) $ the natural coordinates on the fiber induced by $ q $. This means that every $ (p,v) \in TQ $ will be written is these coordinates as $ (p^1,\ldots,p^n,v^1,\ldots,v^n) \in \mathds{R}^{2n} $, where $ p^i = q^i(p) $ and $ v^i = \dot{q}^i(v) = v(q^i) $, i.e. $ v = v^i \frac{\partial}{\partial q^i}\rvert_p $ (using Einstein's summation convention). Then:
\begin{itemize}
\item Any function on $ TQ $ such as the Finsler metrics $ F_j $ can be written in coordinates as $ F_j \equiv F_j(q,\dot{q}) $ and it makes sense to compute the following derivatives for $ j = 1,2 $:
\begin{equation*}
\begin{split}
& \frac{\partial F_j}{\partial q}(p,v) := \left( \frac{\partial F_j}{\partial q^1}(p,v),\ldots,\frac{\partial F_j}{\partial q^n}(p,v) \right) \in \mathds{R}^n, \\
& \frac{\partial F_j}{\partial \dot{q}}(p,v) := \left( \frac{\partial F_j}{\partial \dot{q}^1}(p,v),\ldots,\frac{\partial F_j}{\partial \dot{q}^n}(p,v) \right) \in \mathds{R}^n,
\end{split}
\end{equation*}

\item A curve $ \gamma: [0,t_0] \rightarrow Q $ will be written in these coordinates as $ \gamma(t) \equiv q(t) = (q^1(t),\ldots,q^n(t)) $, where $ q^i(t) := q^i(\gamma(t)) $, and its tangent vector as $ \gamma'(t) \equiv \dot{q}(t) = (\dot{q}^1(t),\ldots,\dot{q}^n(t)) $, where
\begin{equation*}
\dot{q}^i(t) := \dot{q}^i(\gamma'(t)) = \frac{dq^i}{dt}(t), \qquad i = 1,\ldots,n.
\end{equation*}
This way, we can also write for $ j = 1,2 $:
\begin{equation*}
\begin{split}
& F_j(t) := F_j(q(t),\dot{q}(t)), \\
& \frac{\partial F_j}{\partial q}(t) := \frac{\partial F_j}{\partial q}(q(t),\dot{q}(t)), \\
& \frac{\partial F_j}{\partial \dot{q}}(t) := \frac{\partial F_j}{\partial \dot{q}}(q(t),\dot{q}(t)),
\end{split}
\end{equation*}

\item If we consider an admissible variation $ H $ of $ \gamma $, then we write in coordinates $ H(\omega,t) \equiv q_{\omega}(t) $, being $ \dot{q}_{\omega}(t) $ the tangent vector of the curve $ t \mapsto q_{\omega}(t) $. Also, the variational field is $ t \mapsto (q(t),W(t)) $, with $ W(t) = W^i(t) \frac{\partial}{\partial q^i}\rvert_{q(t)} \equiv (W^1(t),\ldots,W^n(t)) $ and
\begin{equation*}
W^i(t) = \left. \frac{\partial q^i_{\omega}(t)}{\partial \omega} \right\rvert_{\omega=0}, \qquad i=1,\ldots,n.
\end{equation*}
\end{itemize}

Finally, observe that even if $ q $ is not a global coordinate system on $ Q $, the compactness of $ [0,t_0] $ allows one to find a finite number of coordinate charts $ \lbrace(U_i,q_i)\rbrace_{i=1}^{\tilde{k}+\tilde{l}} $ and a partition of the interval $ 0 = r_1 < \ldots < r_{\tilde{k}} = \tau < \ldots < r_{\tilde{k}+\tilde{l}} = t_0 $ such that $ \gamma([r_i,r_{i+1}]) \subset U_i $. So, we can compute
\begin{equation*}
\begin{split}
T[\gamma] & = \int_{0}^{\tau} F_1(\gamma(t),\gamma'(t)) dt + \int_{\tau}^{t_0} F_2(\gamma(t),\gamma'(t)) dt = \\
& = \sum_{i=1}^{\tilde{k}-1} \int_{r_i}^{r_{i+1}} F_1(q_i(t),\dot{q}_i(t)) dt + \sum_{i=\tilde{k}}^{\tilde{k}+\tilde{l}-1} \int_{r_i}^{r_{i+1}} F_2(q_i(t),\dot{q}_i(t)) dt
\end{split}
\end{equation*}
and therefore, without loss of generality, we can assume that $ q $ is a global coordinate system on $ Q $ (and so is $ (q,\dot{q}) $ on $ TQ $).
\end{notation}

\subsection{Critical points of the traveltime functional}
\label{subsec:crit_points}
Let us now derive the conditions for the time-parametrized $ \gamma \in \mathcal{N} $ to be a critical point of $ T_{\mathcal{N}} $. If such a curve exists, then
\begin{equation}
\label{eq:integrals}
\begin{split}
0 = & \left. \frac{d}{d\omega} T_{\mathcal{N}}[\gamma_{\omega}] \right\rvert_{\omega=0} = \\
= & \left. \frac{d}{d\omega} \right\rvert_{\omega=0} \int_{0}^{\tau} F_1(q_{\omega}(t),\dot{q}_{\omega}(t))dt + \left. \frac{d}{d\omega} \right\rvert_{\omega=0} \int_{\tau}^{t_0} F_2(q_{\omega}(t),\dot{q}_{\omega}(t))dt
\end{split}
\end{equation}
for every admissible variation of $ \gamma $. The first integral is
\begin{equation*}
\begin{split}
I_1 = & \int_{0}^{\tau} \left. \frac{\partial}{\partial\omega} F_1(q_{\omega}(t),\dot{q}_{\omega}(t)) \right\rvert_{\omega=0} dt = \\
= & \int_{0}^{\tau} \left( \frac{\partial F_1}{\partial q^i}(t) W^i(t) + \frac{\partial F_1}{\partial \dot{q}^i}(t) \frac{d}{dt}W^i(t) \right) dt = \\
= & \int_{0}^{\tau} \left[ \left( \frac{\partial F_1}{\partial q^i}(t) - \frac{d}{dt}\frac{\partial F_1}{\partial \dot{q}^i}(t) \right) W^i(t) + \frac{d}{dt} \left(\frac{\partial F_1}{\partial \dot{q}^i}(t) W^i(t) \right) \right] dt = \\
= & \frac{\partial F_1}{\partial \dot{q}^i}(\tau^-) W^i(\tau) + \int_{0}^{\tau} \left( \frac{\partial F_1}{\partial q^i}(t) - \frac{d}{dt}\frac{\partial F_1}{\partial \dot{q}^i}(t) \right) W^i(t) dt.
\end{split}
\end{equation*}
Analogously, the second integral is
\begin{equation*}
I_2 = - \frac{\partial F_2}{\partial \dot{q}^i}(\tau^+) W^i(\tau) + \int_{\tau}^{t_0} \left( \frac{\partial F_2}{\partial q^i}(t) - \frac{d}{dt}\frac{\partial F_2}{\partial \dot{q}^i}(t) \right) W^i(t) dt.
\end{equation*}
Note that in both cases we have used that $ W(0) = W(t_0) = 0 $, since the variation has fixed endpoints. Then \eqref{eq:integrals} reduces to
\begin{equation*}
\begin{split}
0 = & \left[ \frac{\partial F_1}{\partial \dot{q}^i}(\tau^-) - \frac{\partial F_2}{\partial \dot{q}^i}(\tau^+) \right] W^i(\tau) + \int_{0}^{\tau} \left( \frac{\partial F_1}{\partial q^i}(t) - \frac{d}{dt}\frac{\partial F_1}{\partial \dot{q}^i}(t) \right) W^i(t) dt \\
& + \int_{\tau}^{t_0} \left( \frac{\partial F_2}{\partial q^i}(t) - \frac{d}{dt}\frac{\partial F_2}{\partial \dot{q}^i}(t) \right) W^i(t) dt
\end{split}
\end{equation*}
and this must hold for every admissible variation $ H(\omega,t) \equiv q_{\omega}(t) $, which implies that $ \gamma $ must satisfy
\begin{equation}
\label{eq:euler}
\begin{split}
& \frac{\partial F_1}{\partial q^i}(t) - \frac{d}{dt}\frac{\partial F_1}{\partial \dot{q}^i}(t) = 0, \qquad \forall t \in [0,\tau), \\
& \frac{\partial F_2}{\partial q^i}(t) - \frac{d}{dt}\frac{\partial F_2}{\partial \dot{q}^i}(t) = 0, \qquad \forall t \in (\tau,t_0],
\end{split}
\end{equation}
and the following additional condition at the break point $ \gamma(\tau) $:
\begin{equation}
\label{eq:break}
\left[ \frac{\partial F_1}{\partial \dot{q}^i}(\tau^-) - \frac{\partial F_2}{\partial \dot{q}^i}(\tau^+) \right] W^i(\tau) = 0, \qquad \forall H.
\end{equation}

On the one hand, the two equations in \eqref{eq:euler} are the well-known Euler-Lagrange equations, equivalent to the (pre-)geodesic equations for the metrics $ F_1, F_2 $. Along with the time-parametrization conditions in \eqref{eq:time_par}, this tells us that $ \gamma $ is an $ F_1 $-unit speed geodesic from $ \gamma(0) = q_1 \in Q_1 $ to $ \gamma(\tau) \in \eta $ and an $ F_2 $-unit speed geodesic from there to $ \gamma(t_0) = q_2 \in Q_2 $. On the other hand, \eqref{eq:break} is the condition at the break point $ \gamma(\tau) $ that provides the desired Snell's law, as we will see next.

\subsection{Generalized Snell's law}
Let us now examine the equation \eqref{eq:break}. First, note that for every admissible variation $ H(\omega,t) \equiv q_{\omega}(t) $, we have that $ q_{\omega}(\tau) \in \eta $ for all $ \omega \in (-\varepsilon,\varepsilon) $, where $ \tau $ is fixed (recall Definition~\ref{def:variation}). Therefore
\begin{equation*}
\left. \frac{\partial q_{\omega}(\tau)}{\partial\omega} \right\rvert_{\omega=0} = W(\tau) \in T_{q(\tau)}\eta
\end{equation*}
and this restriction must hold for all $ H $, so \eqref{eq:break} can be rewritten as
\begin{equation}
\label{eq:snell}
\left[ \frac{\partial F_1}{\partial \dot{q}^i}(\tau^-) - \frac{\partial F_2}{\partial \dot{q}^i}(\tau^+) \right] u^i = 0, \qquad \forall u \in T_{q(\tau)}\eta,
\end{equation}
where $ u^i = \dot{q}^i(u) $. This is the condition at the break point that governs the change in the trajectory's direction, and thus we arrive at the general version of Snell's law:
\begin{thm}
\label{th:refraction}
Let $ \gamma: [0,t_0] \rightarrow Q $ be a time-parametrized curve in $ \mathcal{N} $. Then $ \gamma $ is a critical point of $ T_{\mathcal{N}} $ if and only if it is an $ F_1 $-unit speed geodesic from $ \gamma(0) \in Q_1 $ to $ \gamma(\tau) \in \eta $, an $ F_2 $-unit speed geodesic from $ \gamma(\tau) \in \eta $ to $ \gamma(t_0) \in Q_2 $, and it satisfies the generalized Snell's law given by \eqref{eq:snell} at the break point $ \gamma(\tau) \in \eta $. In this case we say that $ \gamma $ is a {\em refracted trajectory}.
\end{thm}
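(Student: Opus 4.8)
The plan is to compute the first variation of $ T_{\mathcal{N}} $ directly from its definition, isolate an interior (Euler--Lagrange) contribution together with a boundary contribution concentrated at the break time $ \tau $, and then deduce the three asserted conditions by a two-stage application of the fundamental lemma of the calculus of variations. The forward and backward implications will both be read off from a single first-variation identity.

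First I would differentiate $ T_{\mathcal{N}}[\gamma_\omega] $ in $ \omega $ under the integral sign, which is legitimate because an admissible variation is smooth on each slab $ (-\varepsilon,\varepsilon)\times[s_i,s_{i+1}] $ and $ F_1,F_2 $ extend smoothly to $ \eta $. Splitting the derivative into the two integrals $ I_1 $ over $ [0,\tau] $ and $ I_2 $ over $ [\tau,t_0] $ as in \eqref{eq:integrals}, I would integrate by parts in each the term containing $ \frac{d}{dt}W^i $; using the fixed-endpoint conditions $ W(0)=W(t_0)=0 $, the only surviving boundary terms are those evaluated at $ t=\tau $. This yields the identity displayed just before \eqref{eq:euler}, namely the sum of the bracketed break term $ \bigl[\frac{\partial F_1}{\partial\dot q^i}(\tau^-)-\frac{\partial F_2}{\partial\dot q^i}(\tau^+)\bigr]W^i(\tau) $ and the two Euler--Lagrange integrals, which must vanish for every admissible variation.

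For the forward implication I would argue in two stages. Choosing variations whose variational field $ W $ is compactly supported in the open interval $ (0,\tau) $ (respectively $ (\tau,t_0) $) kills the break term and one of the integrals, so the fundamental lemma forces the Euler--Lagrange equations \eqref{eq:euler} on each subinterval; together with the standing time-parametrization \eqref{eq:time_par} these are exactly the unit-speed geodesic equations for $ F_1 $ and $ F_2 $. With \eqref{eq:euler} in force, only the break term \eqref{eq:break} remains, and it must vanish for all $ H $. Since each $ \gamma_\omega $ keeps its break point on $ \eta $ at the fixed parameter value $ \tau $, differentiating $ \gamma_\omega(\tau)\in\eta $ in $ \omega $ gives $ W(\tau)\in T_{q(\tau)}\eta $; conversely every vector of $ T_{q(\tau)}\eta $ is realized as such a $ W(\tau) $ by an admissible variation that slides the break point along $ \eta $ while fixing the endpoints. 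Hence the bracketed covector annihilates all of $ T_{q(\tau)}\eta $, which is precisely the generalized Snell's law \eqref{eq:snell}. The converse is obtained by reading the same first-variation identity backwards: if $ \gamma $ solves \eqref{eq:euler} on each piece and satisfies \eqref{eq:snell}, then for an arbitrary admissible variation both integrals and the break term vanish, so $ \left.\frac{d}{d\omega}T_{\mathcal{N}}[\gamma_\omega]\right|_{\omega=0}=0 $ and $ \gamma $ is critical.

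The step I expect to be the main obstacle is the realizability claim underlying both uses of the fundamental lemma: one must actually construct admissible variations producing (i) arbitrary interior fields vanishing near $ \tau $ and the endpoints, and (ii) a prescribed $ W(\tau)\in T_{q(\tau)}\eta $ while respecting the constraints $ \gamma_\omega(0)=q_1 $, $ \gamma_\omega(t_0)=q_2 $ and $ \gamma_\omega(\tau)\in\eta $. For (ii) I would take a curve in $ \eta $ through $ q(\tau) $ with the desired velocity, thicken it into a variation using normal coordinates along $ \gamma $ (or the exponential map), and cut it off smoothly so that the endpoints stay fixed. Care is needed because $ \gamma_\omega $ for $ \omega\neq0 $ need not be time-parametrized, and $ \tau $ need not be the actual time at which $ \gamma_\omega $ meets $ \eta $; however, neither fact affects the value of $ W(\tau) $, so it does not interfere with extracting \eqref{eq:snell}.
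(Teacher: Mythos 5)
Your proposal is correct and follows essentially the same route as the paper: the first variation of $T_{\mathcal{N}}$ is split at $\tau$, integrated by parts with fixed endpoints, and the resulting identity yields the Euler--Lagrange equations \eqref{eq:euler} on each piece plus the break condition \eqref{eq:break}, which becomes \eqref{eq:snell} since $W(\tau)$ ranges over $T_{q(\tau)}\eta$; the converse is read off the same identity. The only difference is that you make explicit the realizability of arbitrary interior fields and of any prescribed $W(\tau)\in T_{q(\tau)}\eta$ by admissible variations, a point the paper leaves implicit.
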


\begin{rem}
The formulation we have carried out here is typical in classical mechanics, where instead of a Finsler metric $ F $ one has a Lagrangian defined as $ L = K-P $, being $ K $ and $ P $ the total kinetic and potential energy of a system of particles, respectively, and instead of minimizing the traveltime functional $ T = \int F $ one looks for the critical points of the action functional $ S = \int L $. In this setting, \eqref{eq:snell} states that the total linear momentum of the system must be conserved in the tangent directions to $ \eta $ (see e.g. \cite{FMOW}).
\end{rem}

Snell's law is essentially an equation system involving $ q(\tau) $, $ \dot{q}(\tau^-) $ and $ \dot{q}(\tau^+) $. Typically, $ q(\tau) $, $ \dot{q}(\tau^-) $ are known from the initial conditions and the geodesic equations in the first region $ Q_1 $, so one looks for $ \dot{q}(\tau^+) $. Since $ \text{dim}(\eta)=\text{dim}(T_{q(\tau)}\eta) = n-1 $, \eqref{eq:snell} provides exactly $ n-1 $ independent equations. With the additional condition $ F_2(q(\tau),\dot{q}(\tau^+))=1 $, we have $ n $ equations to find the $ n $-component vector $ \dot{q}(\tau^+) $. Alternatively, one can just find the \emph{direction} of $ \dot{q}(\tau^+) $ from \eqref{eq:snell}, which is usually more convenient in dimension $ 2 $, as we will see next.

\subsection{The two-dimensional case}
\label{subsec:two-dim}
If $ \text{dim}(Q) = n = 2 $, then the direction of the incident and refracted trajectory can be written in terms of the following angles.

\begin{defi}
Let $ n=2 $ and $ \gamma \in \mathcal{N} $. We define the {\em angles of incidence} and {\em refraction} $ \theta_1, \theta_2 \in (-\pi,\pi] $ as the (oriented) directions of $ \dot{q}(\tau^-), \dot{q}(\tau^+) $, respectively, measured with respect to the $ \dot{q}^1 $-axis. Namely,
\begin{equation*}
\tan\theta_1 = \frac{\dot{q}^2(\tau^-)}{\dot{q}^1(\tau^-)}, \qquad \tan\theta_2 = \frac{\dot{q}^2(\tau^+)}{\dot{q}^1(\tau^+)},
\end{equation*}
with the restriction that $ v_{\theta_1} := (\cos\theta_1,\sin\theta_1) $ and $ v_{\theta_2} := (\cos\theta_2,\sin\theta_2) $ must point to the $ Q_2 $-side of $ T_{q(\tau)}\eta $.
\end{defi}

This way, for a fixed break point $ q(\tau) \in \eta $, Snell's law is transformed into an equation between the angles of incidence and refraction. Specifically, it can always be written explicitly in terms of $ \tan\theta_1 $ and $ \tan\theta_2 $. Indeed, since $ F_1 $ and $ F_2 $ are positively homogeneous of degree one, $ F_j(\lambda v) = |\lambda| F_j(\text{sgn}(\lambda)v) $ for all $ v \in T_{q(\tau)}\eta $, $ \lambda \in \mathds{R} $ and, as a consequence,
\begin{equation*}
\frac{\partial F_j}{\partial \dot{q}}(\lambda v) = \frac{\partial F_j}{\partial \dot{q}}(\text{sgn}(\lambda)v), \qquad j=1,2,
\end{equation*}
so, in particular,
\begin{equation*}
\begin{split}
\frac{\partial F_j}{\partial \dot{q}^i}(\dot{q}^1,\dot{q}^2) = & \frac{\partial F_j}{\partial \dot{q}^i}\left( \text{sgn}(\dot{q}^1) \left( 1,\frac{\dot{q}^2}{\dot{q}^1} \right) \right) = \frac{\partial F_j}{\partial \dot{q}^i}(\text{sgn}(\dot{q}^1)(1,\tan\theta_j)),
\end{split}
\end{equation*}
for $ i,j=1,2 $, where the references to $ q(\tau) $ and $ \tau^\pm $ have been omitted.

Therefore, in general one can solve \eqref{eq:snell} for $ \tan\theta_2 $ and then obtain $ \theta_2 \in (-\pi,\pi] $ by imposing the restriction that $ v_{\theta_2} $ must point to the $ Q_2 $-side of $ T_{q(\tau)}\eta $.

\begin{exe}
\label{ex:classical_snell}
Let us recover the classical Snell's law in the Euclidean plane. In this usual setting, $ Q = \mathds{R}^2 $, $ (q,\dot{q}) = (x,y,\dot{x},\dot{y}) $ and the background Riemannian metric $ g = dx^2+dy^2 $ is the natural one in $ \mathds{R}^2 $. Let $ Q_1 = \lbrace (x,y) \in \mathds{R}^2: x<0 \rbrace $ and $ Q_2 = \lbrace (x,y) \in \mathds{R}^2: x>0 \rbrace $, so $ \eta $ is the vertical line $ x = 0 $. In this situation, assume that the propagation of light is isotropic and its velocity depends on the refractive index of the medium. Assume also that $ Q_1 $ and $ Q_2 $ are homogeneous, i.e. their respective refractive indices $ n_1, n_2 $ are constant. Then
\begin{equation*}
\begin{split}
& V_1(x,y,\dot{x},\dot{y}) = \frac{c}{n_1} \quad \Rightarrow \quad F_1(x,y,\dot{x},\dot{y}) = \frac{n_1}{c}\sqrt{\dot{x}^2+\dot{y}^2}, \\
& V_2(x,y,\dot{x},\dot{y}) = \frac{c}{n_2} \quad \Rightarrow \quad F_2(x,y,\dot{x},\dot{y}) = \frac{n_2}{c}\sqrt{\dot{x}^2+\dot{y}^2},
\end{split}
\end{equation*}
where $ c $ is the speed of light in vacuum. The angles of incidence and refraction are given by
\begin{equation*}
\tan\theta_1 = \frac{\dot{y}(\tau^-)}{\dot{x}(\tau^-)}, \qquad \tan\theta_2 = \frac{\dot{y}(\tau^+)}{\dot{x}(\tau^+)},
\end{equation*}
so, omitting the reference to $ \tau^\pm $ we get
\begin{equation*}
\begin{split}
\frac{\partial F_j}{\partial \dot{y}} = & \frac{n_j}{c} \frac{\dot{y}}{\sqrt{\dot{x}^2+\dot{y}^2}} = \frac{n_j}{c} \frac{\dot{y}}{\lvert \dot{x} \rvert \sqrt{1+\dot{y}^2/\dot{x}^2}} = \pm \frac{n_j}{c} \frac{\tan\theta_j}{\sqrt{1+\tan^2\theta_j}} = \\
= & \pm \frac{n_j}{c}\tan\theta_j |\cos\theta_j| = \frac{n_j}{c} \sin\theta_j, \qquad j=1,2,
\end{split}
\end{equation*}
where the $ \pm $ vanishes because $ \text{sgn}(\dot{x}(\tau^-)) = \text{sgn}(\cos\theta_1) $ and $ \text{sgn}(\dot{x}(\tau^+)) = \text{sgn}(\cos\theta_2) $. This way, since in this particular example $ \{(0,1)\} $ is a basis of $ T_{(0,y(\tau))}\eta $ at any break point $ (0,y(\tau)) \in \eta $, \eqref{eq:snell} yields the classical Snell's law:
\begin{equation}
\label{eq:snell_classic}
n_1\sin\theta_1 = n_2\sin\theta_2,
\end{equation}
with the restriction that $ \text{sgn}(\cos\theta_1) = \text{sgn}(\cos\theta_2) > 0 $, so that both $ v_{\theta_1} $ and $ v_{\theta_2} $ point to $ Q_2 $.
\end{exe}

\section{Reflected trajectories}
\label{sec:reflection}
\subsection{The law of reflection}
So far, given a curve $ \gamma $ departing from $ Q_1 $ and intersecting $ \eta $, we have been looking for the refracted trajectory, i.e. the one that crosses $ \eta $ and continues into $ Q_2 $. It is interesting also to seek the reflected trajectory, which will be the one that minimizes the traveltime in a slightly different set of curves than $ \mathcal{N} $ in Definition~\ref{def:traveltime}.

Specifically, fixing $ [0,t_0] $, $ \tau \in (0,t_0) $ and two points $ q_1, q_2 \in Q_1 $, we define $ \mathcal{N}^* $ as the set of all (regular) piecewise smooth curves $ \gamma: [0,t_0] \rightarrow Q $ from $ \gamma(0) = q_1 $ to $ \gamma(t_0) = q_2 $ such that $ \gamma $ remains entirely in $ \overline{Q}_1 $, touching $ \eta $ once at $ \gamma(\tau) $.

If one now looks for the critical points of $ T_{\mathcal{N}^*}: \mathcal{N}^* \rightarrow \mathds{R} $ in a similar way as in the previous section, one finds, on the one hand, the Euler-Lagrange equations (compare with \eqref{eq:euler})
\begin{equation*}
\frac{\partial F_1}{\partial q^i}(t) - \frac{d}{dt}\frac{\partial F_1}{\partial q^i}(t) = 0, \qquad \forall t \in [0,\tau) \cup (\tau,t_0],
\end{equation*}
which now state that the candidate curve $ \gamma(t) \equiv q(t) $ must be an $ F_1 $-geodesic from $ q_1 $ to $ \eta $ and also from $ \eta $ to $ q_2 $, and on the other hand, instead of Snell's law \eqref{eq:snell} (i.e. the refraction law), we get  the following reflection law:
\begin{equation}
\label{eq:reflection}
\left[ \frac{\partial F_1}{\partial \dot{q}^i}(\tau^-) - \frac{\partial F_1}{\partial \dot{q}^i}(\tau^+) \right] u^i = 0, \qquad \forall u \in T_{q(\tau)}\eta.
\end{equation}

Therefore, analogously to Theorem~\ref{th:refraction}, we have the following result.
\begin{thm}
\label{th:reflection}
Let $ \gamma: [0,t_0] \rightarrow Q $ be a time-parametrized curve in $ \mathcal{N}^* $. Then $ \gamma $ is a critical point of $ T_{\mathcal{N}^*} $ if and only if it is an $ F_1 $-unit speed geodesic from $ \gamma(0) \in Q_1 $ to $ \gamma(\tau) \in \eta $ and from $ \gamma(\tau) \in \eta $ to $ \gamma(t_0) \in Q_1 $, and it satisfies the law of reflection given by \eqref{eq:reflection} at the break point $ \gamma(\tau) \in \eta $. In this case we say that $ \gamma $ is a {\em reflected trajectory}.
\end{thm}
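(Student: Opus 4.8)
The plan is to run the first-variation computation of Section~\ref{sec:time_min} essentially verbatim, with the single structural change that the second Finsler metric $F_2$ is everywhere replaced by $F_1$, since a curve in $\mathcal{N}^*$ never leaves $\overline{Q}_1$. Concretely, I would fix a time-parametrized $\gamma\in\mathcal{N}^*$ with $\gamma(0)=q_1$, $\gamma(t_0)=q_2\in Q_1$ and $\gamma(\tau)\in\eta$, and write the traveltime split at the touching time as $T_{\mathcal{N}^*}[\gamma]=\int_0^\tau F_1(\gamma,\gamma')\,dt+\int_\tau^{t_0} F_1(\gamma,\gamma')\,dt$. The admissible variations are those of Definition~\ref{def:variation}, except that $q_2\in Q_1$ and each $\gamma_\omega$ is required to remain in $\overline{Q}_1$ while still satisfying $\gamma_\omega(0)=q_1$, $\gamma_\omega(t_0)=q_2$ and $\gamma_\omega(\tau)\in\eta$.

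Next I would differentiate under the integral sign and integrate by parts on each subinterval, exactly as in the computation of $I_1$ and $I_2$ that produced \eqref{eq:euler} and \eqref{eq:break}. Using $W(0)=W(t_0)=0$, the two interior integrals yield the $F_1$-Euler--Lagrange operator applied to $W$, while the boundary contributions at $\tau$ collapse to $\left[\frac{\partial F_1}{\partial\dot q^i}(\tau^-)-\frac{\partial F_1}{\partial\dot q^i}(\tau^+)\right]W^i(\tau)$. Demanding that the first variation vanish for all admissible $H$, and first restricting to variations with $W(\tau)=0$, forces the $F_1$-pre-geodesic equations on $[0,\tau)$ and on $(\tau,t_0]$; combined with the time-parametrization \eqref{eq:time_par} (now with $F_1$ on both pieces) this gives the two $F_1$-unit speed geodesic segments. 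Feeding this back, only the boundary term survives, and since $\gamma_\omega(\tau)\in\eta$ forces $W(\tau)\in T_{q(\tau)}\eta$ while every $u\in T_{q(\tau)}\eta$ is realized as some $W(\tau)$, its vanishing is equivalent to the reflection law \eqref{eq:reflection}. The converse is immediate: if $\gamma$ solves both geodesic equations and \eqref{eq:reflection}, then for every admissible variation the interior integrals vanish by Euler--Lagrange and the boundary term vanishes by \eqref{eq:reflection}, so $\gamma$ is critical. This is the exact parallel of Theorem~\ref{th:refraction}.

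The one point that genuinely requires care---and which I expect to be the main obstacle---is admissibility of the variation class under the one-sided confinement $\gamma_\omega\subset\overline{Q}_1$. In the refraction setting the curve crosses $\eta$ transversally and there is no confinement, whereas here the broken geodesic only \emph{touches} $\eta$ at the single vertex $\gamma(\tau)$, and one must rule out that this degenerates the first-variation argument into a one-sided (inequality) condition. The resolution is that, because $\gamma$ meets $\eta$ at exactly one point while its two open segments lie in the open set $Q_1$ and approach the vertex transversally, for sufficiently small $\varepsilon$ any variation with $\gamma_\omega(\tau)\in\eta$ and fixed endpoints stays in $\overline{Q}_1$ by openness of the transversality condition; the confinement is thus active only at $\tau$, where it is already captured by the \emph{equality} constraint $\gamma_\omega(\tau)\in\eta$. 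Consequently $W(\tau)\in T_{q(\tau)}\eta$ is both necessary and the only restriction on $W$, so the argument closes exactly as for Theorem~\ref{th:refraction}. The assumed smooth extendability of $F_1$ to $\eta$ guarantees that the one-sided limits $\frac{\partial F_1}{\partial\dot q^i}(\tau^\pm)$ and all the integrals exist.
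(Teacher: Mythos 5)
Your proposal is correct and is essentially the paper's own argument: the paper gives no separate proof of Theorem~\ref{th:reflection}, but simply notes that the first-variation computation of \S~\ref{subsec:crit_points} applies verbatim with $F_2$ replaced by $F_1$ on $(\tau,t_0]$, producing the $F_1$-Euler--Lagrange equations on both segments together with the boundary term whose vanishing for all $u \in T_{q(\tau)}\eta$ is exactly the reflection law \eqref{eq:reflection}. Your extra discussion of the one-sided confinement in $\overline{Q}_1$ -- a point the paper passes over in silence -- is sound under the paper's Definition~\ref{def:variation}: because admissible variations are smooth up to $\tau$ on each side, the transversal (linear-in-$|t-\tau|$) approach of $\gamma$ to $\eta$ dominates any smooth perturbation vanishing at $\tau$, so the constraint $\gamma_{\omega}(\tau) \in \eta$ is indeed the only active restriction, the realizable fields $W(\tau)$ fill the whole linear space $T_{q(\tau)}\eta$ (e.g.\ by sliding the vertex along $\eta$ in coordinates adapted to $\eta$), and criticality is an honest equality rather than a one-sided variational inequality.
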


Moreover, the discussion developed in \S~\ref{subsec:two-dim} becomes valid here as well, i.e. in the two-dimensional case the law of reflection can be written in terms of the following angles.

\begin{defi}
\label{def:angles}
Let $ n = 2 $ and $ \gamma \in \mathcal{N}^* $. We define the {\em angles of incidence} and {\em reflection} $ \theta_1, \theta_3 \in (-\pi,\pi] $ as the (oriented) directions of $ \dot{q}(\tau^-), \dot{q}(\tau^+) $, respectively, measured with respect to the $ \dot{q}^1 $-axis. Namely,
\begin{equation*}
\tan\theta_1 = \frac{\dot{q}^2(\tau^-)}{\dot{q}^1(\tau^-)}, \qquad \tan\theta_3 = \frac{\dot{q}^2(\tau^+)}{\dot{q}^1(\tau^+)},
\end{equation*}
with the restriction that $ v_{\theta_1} := (\cos\theta_1,\sin\theta_1) $ must point to the $ Q_2 $-side and $ v_{\theta_3} := (\cos\theta_3,\sin\theta_3) $ to the $ Q_1 $-side of $ T_{q(\tau)}\eta $.
\end{defi}

Note the difference between the angles of refraction and reflection (compare with Definition~\ref{def:angles}): now $ \dot{q}(\tau^+) $ must point to the $ Q_1 $-side of $ T_{q(\tau)}\eta $ because otherwise, $ \gamma $ would cross to $ Q_2 $ and thus, $ \gamma \notin \mathcal{N}^* $ (see Figure~\ref{fig:plane}).

\begin{figure}
\centering
\includegraphics[width=1\textwidth]{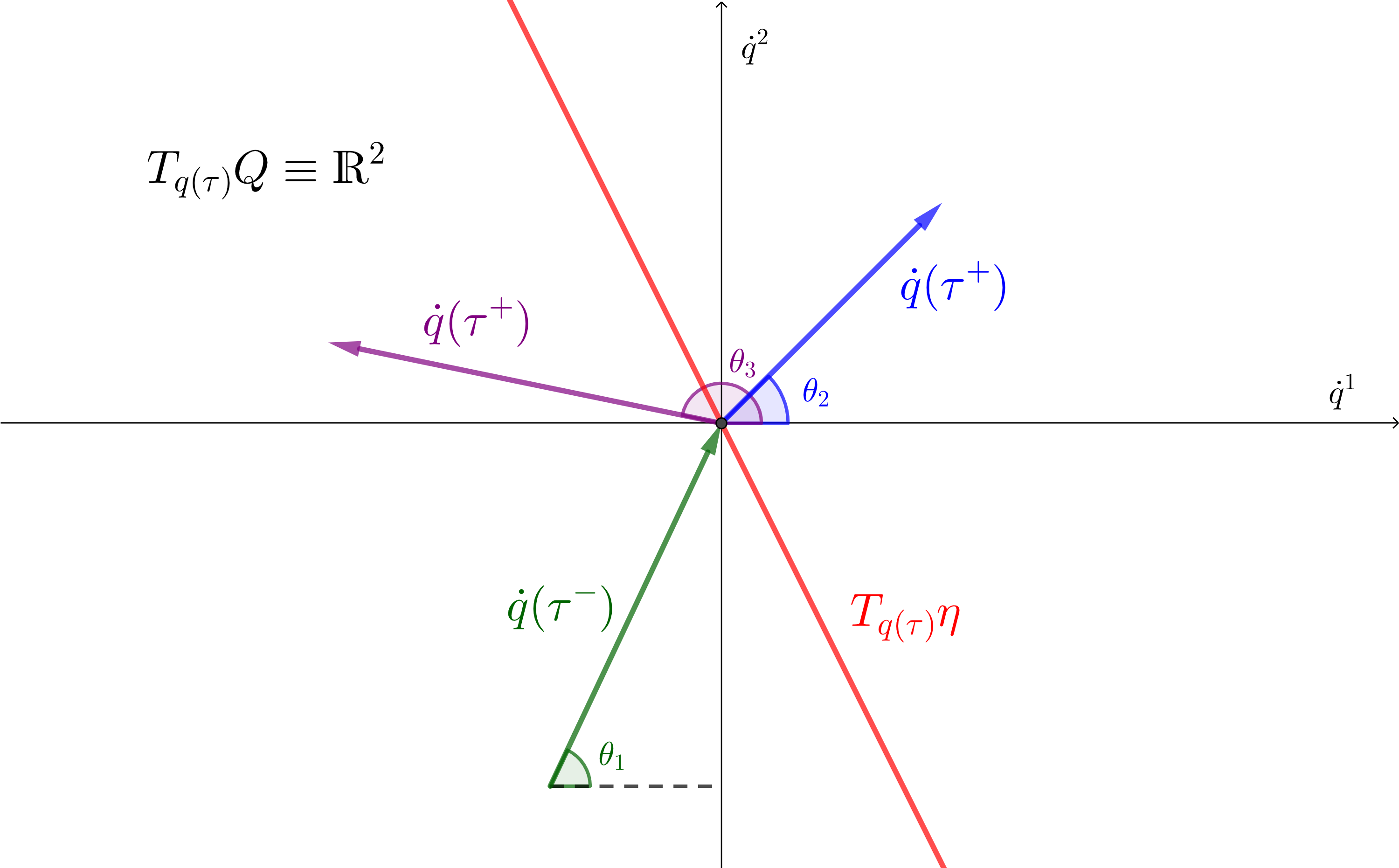}
\caption{In the two-dimensional case, Snell's law or the law of reflection can be written in terms of the angle of incidence $ \theta_1 $ and the angle of refraction $ \theta_2 $ or reflection $ \theta_3 $, respectively. When looking for the refracted trajectory $ \gamma \in \mathcal{N} $ (resp. the reflected trajectory $ \gamma \in \mathcal{N}^* $), $ \dot{q}(\tau^+) $ must point to the $ Q_2 $-side (resp. the $ Q_1 $-side) of $ T_{q(\tau)}\eta $.}
\label{fig:plane}
\end{figure}

\begin{exe}
\label{ex:classical_reflection}
In the same setting as in Example~\ref{ex:classical_snell}, let us recover the classical reflection law. The angles of incidence and reflection are
\begin{equation*}
\tan\theta_1 = \frac{\dot{y}(\tau^-)}{\dot{x}(\tau^-)}, \qquad \tan\theta_3 = \frac{\dot{y}(\tau^+)}{\dot{x}(\tau^+)},
\end{equation*}
with $ \text{sgn}(\dot{x}(\tau^-)) = \text{sgn}(\cos\theta_1) $ and $ \text{sgn}(\dot{x}(\tau^+)) = \text{sgn}(\cos\theta_2) $, so
\begin{equation*}
\frac{\partial F_1}{\partial \dot{y}}(\tau^-) =  \frac{n_1}{c} \sin\theta_1, \qquad \frac{\partial F_1}{\partial \dot{y}}(\tau^+) =  \frac{n_1}{c} \sin\theta_3.
\end{equation*}
Thus, \eqref{eq:reflection} directly yields
\begin{equation}
\label{eq:reflection_classical}
\sin\theta_1 = \sin\theta_3,
\end{equation}
with the restriction that $ \text{sgn}(\cos\theta_1) > 0 $ and $ \text{sgn}(\cos\theta_3) < 0 $, so that $ v_{\theta_1} $ points to $ Q_2 $ and $ v_{\theta_3} $ to $ Q_1 $. For example, if $ \theta_1 \in [0,\frac{\pi}{2}) $, then $ \theta_3 = \pi-\theta_1 $. If $ \theta_1 \in (-\frac{\pi}{2},0) $, then $ \theta_3 = -\pi-\theta_1 $.\footnote{Another usual way to define the angles of incidence, refraction and reflection is to restrict their values to $ [0,\frac{\pi}{2}) $ and measure them with respect to the normal direction to $ T_{(x(\tau),y(\tau))}\eta $. In this case, the classical Snell's law would still be \eqref{eq:snell_classic} but without any additional restriction, and the classical reflection law would read $ \theta_1 = \theta_3 $.}
\end{exe}

\subsection{The total reflection phenomenon}
In general, the existence of solutions to Snell's law \eqref{eq:snell} is not guaranteed. Namely, given an incident velocity $ \dot{q}(\tau^-) $, there might not be a solution $ \dot{q}(\tau^+) $ pointing to the $ Q_2 $-side of $ T_{q(\tau)}\eta $. In this case, there is no refraction and the incoming trajectory is totally reflected without being able to cross into $ Q_2 $. This is known as the {\em total reflection phenomenon}. The limit between the existence and non-existence of solution is given by the situation where $ \dot{q}(\tau^+) $ is tangent to $ T_{q(\tau)}\eta $, which motivates the following definition.

\begin{defi}
We say that an incident velocity $ \dot{q}(\tau^-) $ (or an angle of incidence $ \theta_1 $ in the two-dimensional case) is {\em critical} if there exists a solution $ \dot{q}(\tau^+) $ to Snell's law \eqref{eq:snell} which is tangent to $ T_{q(\tau)}\eta $.
\end{defi}

The trajectory $ \gamma $ associated with the critical velocity can still be considered as an actual refracted trajectory as long as immediately after touching $ \eta $, it passes to $ Q_2 $ or remains on $ \eta $ with the $ F_{2} $ metric, i.e. if there exists $ \delta > 0 $ such that $ \gamma(\tau + t) \in (\overline{Q}_2, F_{2}) $ for all $ t \in (0,\delta) $. We say in this case that $ \gamma $ is a {\em critical trajectory}. In fact, critical trajectories along $ \eta $ are relevant in situations such as the one we will describe in the next section.

\begin{exe}
\label{exe:Classic2}
In the classical setting of Example~\ref{ex:classical_snell}, $ v_{\theta_2} $ is tangent to $ \eta $ when $ \theta_2 = \pm \frac{\pi}{2} $, so the critical angles are given by
\begin{equation*}
\theta^{\pm}_c = \pm \arcsin\left(\frac{n_2}{n_1}\right).
\end{equation*}
Note that these angles only exist if $ n_1 > n_2 $, i.e. if the velocity $ V_{2} $ on $ Q_2 $ is greater than $ V_{1} $ on $ Q_1 $. If, for example, $ n_1 = 1 $ and $ n_2 = \frac{1}{2} $, then $ \theta^{\pm}_c = \pm \frac{\pi}{6} $. In the cases $ \theta_1 > \frac{\pi}{6} $ or $ \theta_1 < -\frac{\pi}{6} $, there is no angle $ \theta_2 $ satisfying the classical Snell's law \eqref{eq:snell_classic} and pointing to $ Q_2 $. In consequence, there does not exist a time-minimizing path from any  $ q_{1} \in Q_{1} $ to any $ q_{2} \in Q_{2} $ which meets $ \eta $ with $ \theta_1 > \frac{\pi}{6} $ or $ \theta_1 < -\frac{\pi}{6} $, so there is no refraction and the incident trajectory is totally reflected with an angle $ \theta_3 $ satisfying the classical law of reflection \eqref{eq:reflection_classical}. In other terms, suppose $ q_{1} $ has Euclidean distance $ d $ to $ \eta $, then the Euclidean size of the ``window'' through which a time-minimizing trajectory can go through $ \eta $ is
$ \frac{2d\cdot n_{2}}{\sqrt{n_{1}^2 - n_{2}^{2}}} $.
\end{exe}

\section{Global considerations in $ \mathds{R}^2 $}
\label{sec:R2}
In order to reduce the casuistry we can find in the most general case and fix some ideas, we will reduce our study in this section to the classical setting (see Example~\ref{ex:classical_snell}), although maintaining the Finslerian nature of the metrics. Namely, $ Q = \mathds{R}^2 $, $ (q,\dot{q}) = (x,y,\dot{x},\dot{y}) $, $ g = dx^2+dy^2 $, $ Q_1 = \lbrace (x,y) \in \mathds{R}^2: x<0 \rbrace $, $ Q_2 = \lbrace (x,y) \in \mathds{R}^2: x>0 \rbrace $ and $ F_1, F_2 $ are Finsler metrics independent of the position (i.e. Minkowski norms):
\begin{equation*}
F_j(x,y,\dot{x},\dot{y}) = F_j(\dot{x},\dot{y}) = \frac{\sqrt{\dot{x}^2+\dot{y}^2}}{V_j(\dot{x},\dot{y})}, \qquad j=1,2.
\end{equation*}
In particular, this means that the geodesics of $ F_1, F_2 $ are straight lines. Note also that the angles of incidence, refraction and reflection are restricted to the values $ \theta_1 \in (-\frac{\pi}{2},\frac{\pi}{2}) $, $ \theta_2 \in [-\frac{\pi}{2},\frac{\pi}{2}] $, $ \theta_3 \in (-\pi,-\frac{\pi}{2}) \cup (\frac{\pi}{2},\pi] $, and there are at most two critical angles $ \theta_c^\pm $, with $ \theta_c^+ > \theta_c^- $: $ \theta_c^+ $ corresponds to $ \theta_2 = \frac{\pi}{2} $ and $ \theta_c^- $ to $ \theta_2 = -\frac{\pi}{2} $ (the uniqueness of $ \theta_c^\pm $ when they exist is proven below in Lemma~\ref{lem:P}).

\subsection{Existence and uniqueness of solution}
In this setting, Snell's law \eqref{eq:snell} reduces to
\begin{equation*}
\frac{\partial F_1}{\partial\dot{y}}(\tau^-) = \frac{\partial F_2}{\partial\dot{y}}(\tau^+).
\end{equation*}
Now, being $ F_1 $ and $ F_2 $ Minkowski norms, we have that $ \frac{\partial F_j}{\partial y} = 0 $ and thus, from the Euler-Lagrange equations \eqref{eq:euler}:
\begin{equation*}
\begin{split}
& \frac{d}{dt}\frac{\partial F_1}{\partial \dot{y}}(t) = 0, \qquad \forall t \in [0,\tau), \\
& \frac{d}{dt}\frac{\partial F_2}{\partial \dot{y}}(t) = 0, \qquad \forall t \in (\tau,t_0].
\end{split}
\end{equation*}
which means, together with Snell's law, that there is a conserved quantity along the entire refracted curve $ \gamma \in \mathcal{N} $.\footnote{This is related to Noether's theorem: the translational symmetry of the setting along the $ y $-axis implies the existence of a {\em constant of motion}.} This constant is called the {\em raypath parameter} (see \cite{BS}).  It  only depends on the initial direction, due to the zero-homogeneity of $ \frac{\partial F_j}{\partial \dot{y}} $. So, Snell's law becomes
\begin{equation}
\label{eq:snell_P}
P_1(\theta_1) = P_2(\theta_2),
\end{equation}
with
\begin{equation}
\label{eq:raypath}
P_j(\theta) := \frac{\partial F_j}{\partial\dot{y}}(\theta) = \frac{\sin\theta}{V_j(\theta)} + \cos\theta\frac{\partial}{\partial \theta} \left(\frac{1}{V_j(\theta)}\right), \qquad j=1,2,
\end{equation}
where we have used that $ \frac{\partial}{\partial \dot{y}} = \frac{\cos^2\theta}{\dot{x}}\frac{\partial}{\partial\theta} $. Note that $ P_j(\theta) $ is smooth, thanks to the smoothness and positiveness of $ V_j(\theta) $.

\begin{lemma}
\label{lem:P}
The functions $ P_j: [-\frac{\pi}{2},\frac{\pi}{2}] \rightarrow \mathds{R} $ are strictly increasing. Also, the critical angle $ \theta_c^+ $ (resp. $ \theta_c^- $) exists and is necessarily unique if and only if $ V_1(\frac{\pi}{2}) < V_2(\frac{\pi}{2}) $ (resp. $ V_1(-\frac{\pi}{2}) < V_2(-\frac{\pi}{2}) $).\footnote{As in the classical case, for both critical angles $ \theta_c^\pm $ to exist we need the velocity $ V_2 $ to be greater than $ V_1 $ on $ \eta $ in both directions $ \pm \frac{\pi}{2} $.}
\end{lemma}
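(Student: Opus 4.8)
The plan is to split the statement into its two assertions and to reduce both to the monotonicity of $P_j$, which I in turn extract from the strong convexity of the indicatrices built into the definition of a Finsler metric.

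\emph{Monotonicity.} Writing $h_j := 1/V_j$, the explicit formula \eqref{eq:raypath} reads $P_j(\theta) = h_j(\theta)\sin\theta + h_j'(\theta)\cos\theta$. Differentiating, the two terms carrying $h_j'\sin\theta$ cancel and I am left with
\[
P_j'(\theta) = \bigl(h_j(\theta) + h_j''(\theta)\bigr)\cos\theta .
\]
Since $\cos\theta > 0$ on the open interval $(-\tfrac{\pi}{2},\tfrac{\pi}{2})$, it remains to check that $h_j + h_j'' > 0$. A direct computation gives
\[
h_j + h_j'' = \frac{V_j^2 + 2(V_j')^2 - V_j V_j''}{V_j^3},
\]
and the numerator is precisely the scalar $\mathbf{c}_j' \times \mathbf{c}_j''$ for the indicatrix curve $\mathbf{c}_j(\theta) = V_j(\theta)(\cos\theta,\sin\theta)$, i.e. (up to the positive factor $|\mathbf{c}_j'|^{3}$) its geodesic curvature. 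By hypothesis $F_j$ is a Finsler metric, so this indicatrix is strongly convex and the numerator is strictly positive; hence $P_j' > 0$ on $(-\tfrac{\pi}{2},\tfrac{\pi}{2})$. As $P_j$ is continuous on the closed interval (smoothness of $V_j$), positivity of the derivative on the interior shows that $P_j$ is strictly increasing on $[-\tfrac{\pi}{2},\tfrac{\pi}{2}]$.

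\emph{Critical angles.} I first record the boundary values, obtained by setting $\cos\theta = 0$ in \eqref{eq:raypath}: $P_j(\tfrac{\pi}{2}) = 1/V_j(\tfrac{\pi}{2})$ and $P_j(-\tfrac{\pi}{2}) = -1/V_j(-\tfrac{\pi}{2})$. By definition $\theta_c^+$ is the angle of incidence for which $\theta_2 = \tfrac{\pi}{2}$, so by Snell's law \eqref{eq:snell_P} it is a solution $\theta_1 \in (-\tfrac{\pi}{2},\tfrac{\pi}{2})$ of $P_1(\theta_1) = P_2(\tfrac{\pi}{2}) = 1/V_2(\tfrac{\pi}{2})$. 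By the first part $P_1$ is a continuous, strictly increasing bijection of $(-\tfrac{\pi}{2},\tfrac{\pi}{2})$ onto $\bigl(-1/V_1(-\tfrac{\pi}{2}),\, 1/V_1(\tfrac{\pi}{2})\bigr)$, so such a (necessarily unique) $\theta_c^+$ exists if and only if $1/V_2(\tfrac{\pi}{2})$ lies in this open interval. The lower bound holds automatically, since $1/V_2(\tfrac{\pi}{2}) > 0 > -1/V_1(-\tfrac{\pi}{2})$, while the upper bound $1/V_2(\tfrac{\pi}{2}) < 1/V_1(\tfrac{\pi}{2})$ is equivalent to $V_1(\tfrac{\pi}{2}) < V_2(\tfrac{\pi}{2})$. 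The case of $\theta_c^-$ (where $\theta_2 = -\tfrac{\pi}{2}$, so $P_1(\theta_c^-) = -1/V_2(-\tfrac{\pi}{2})$) is symmetric: the upper bound is then automatic and the lower bound $-1/V_2(-\tfrac{\pi}{2}) > -1/V_1(-\tfrac{\pi}{2})$ reduces to $V_1(-\tfrac{\pi}{2}) < V_2(-\tfrac{\pi}{2})$. Uniqueness in both cases follows at once from the injectivity of $P_1$.

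\emph{Main obstacle.} The only genuine content is the monotonicity step, and within it the sign of $h_j + h_j''$: the differentiation $P_j' = (h_j+h_j'')\cos\theta$ is routine, but identifying $h_j + h_j''$ with the curvature of the indicatrix — and thereby tying its sign to the strong-convexity hypothesis — is the crux. (An alternative route runs through the fundamental tensor $g^{F_j}$, whose positive definiteness is equivalent to strong convexity and forces the single nonradial eigenvalue of $\hess F_j$ to be positive, again yielding $P_j'>0$.) Once monotonicity is in hand, the critical-angle statement is a direct consequence of the intermediate value theorem together with strict monotonicity, with all the remaining analytic work reduced to evaluating $P_j$ at $\pm\tfrac{\pi}{2}$.
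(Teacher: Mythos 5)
Your proposal is correct and follows essentially the same route as the paper: both differentiate the raypath parameter to get $P_j' = \frac{\cos\theta}{V_j^3}\bigl[V_j^2 + 2(V_j')^2 - V_jV_j''\bigr]$, identify the bracketed quantity with the curvature (second fundamental form) of the indicatrix so that strong convexity gives $P_j' > 0$, and then settle existence and uniqueness of $\theta_c^\pm$ by comparing $P_2(\pm\tfrac{\pi}{2}) = \pm 1/V_2(\pm\tfrac{\pi}{2})$ with the range of the strictly increasing function $P_1$, noting that one of the two bounding inequalities holds automatically by the sign of the endpoint values. The only cosmetic difference is your substitution $h_j = 1/V_j$ to streamline the differentiation and your phrasing of convexity via the cross product $\mathbf{c}_j'\times\mathbf{c}_j''$ rather than the second fundamental form, which are equivalent.
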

\begin{proof}
One can easily verify that
\begin{equation*}
\frac{d P_j}{d\theta} = \frac{\cos\theta}{V_j^3} \left[ V_j^2 + 2\left(\frac{d V_j}{d\theta}\right)^2-V_j\frac{d^2 V_j}{d\theta^2} \right],
\end{equation*}
where the term in brackets is exactly the second fundamental form of the indicatrix $ \Sigma_j $ of $ F_j $ (given by the curve $ (\pi,\pi] \ni \theta \mapsto V_j(\cos\theta,\sin\theta) $) with respect to the natural Euclidean metric in $ \mathds{R}^2 $ and the inner normal vector. The strong convexity of $ \Sigma_j $ and the positiveness of $ V_j $ ensure that $ \frac{d P_j}{d\theta} > 0 $ for all $ \theta \in (-\frac{\pi}{2},\frac{\pi}{2}) $, so we conclude that $ P_j(\theta) $ is strictly increasing in $ \theta \in [-\frac{\pi}{2},\frac{\pi}{2}] $. In particular, this means that the maximum of $ P_j(\theta) $ is $ P_j(\frac{\pi}{2}) = \frac{1}{V_j(\frac{\pi}{2})} $ and the minimum $ P_j(-\frac{\pi}{2}) = -\frac{1}{V_j(-\frac{\pi}{2})} $. 
The critical angle $ \theta_c^+ $ (resp. $ \theta_c^- $) is the angle of incidence satisfying $ P_1(\theta_c^+) = P_2(\frac{\pi}{2}) $ (resp. $ P_1(\theta_c^-) = P_2(-\frac{\pi}{2}) $), so it exists if and only if $ P_1(-\frac{\pi}{2}) < P_2(\frac{\pi}{2}) < P_1(\frac{\pi}{2}) $ (resp. $ P_1(-\frac{\pi}{2}) < P_2(-\frac{\pi}{2}) < P_1(\frac{\pi}{2}) $), but note that the first (resp. second) inequality is guaranteed, as $ P_j(\frac{\pi}{2}) > 0 $ and $ P_j(-\frac{\pi}{2}) < 0 $ for all $ j=1,2 $. Also, the uniqueness of $ \theta_c^\pm $ follows from the fact that $ P_1 $ is strictly increasing.
\end{proof}

\begin{thm}
\label{th:existence_refraction}
Given any angle of incidence $ \theta_1 \in (\theta_c^-,\theta_c^+) $ (replace $ \theta_c^\pm $ with $ \pm \frac{\pi}{2} $ if the critical angles do not exist), there exists a unique angle of refraction $ \theta_2 \in (-\frac{\pi}{2},\frac{\pi}{2}) $ satisfying Snell's law. Equivalently, given any $ q_1 \in Q_1 $ and $ q_2 \in Q_2 $, there exists a unique refracted trajectory $ \gamma \in \mathcal{N} $ joining both points.
\end{thm}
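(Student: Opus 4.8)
The plan is to prove the two assertions in turn, both resting on the fact, established in Lemma~\ref{lem:P}, that each $P_j$ is a strictly increasing continuous bijection from $[-\frac{\pi}{2},\frac{\pi}{2}]$ onto $[-\frac{1}{V_j(-\pi/2)},\frac{1}{V_j(\pi/2)}]$, with $P_j(\pm\frac{\pi}{2}) = \pm\frac{1}{V_j(\pm\pi/2)}$.

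\emph{Step 1, the angle version.} By \eqref{eq:snell_P}, an angle $\theta_2$ satisfies Snell's law precisely when $P_2(\theta_2) = P_1(\theta_1)$. Restricted to $(-\frac{\pi}{2},\frac{\pi}{2})$, $P_2$ is a strictly increasing continuous bijection onto the open interval $\left(P_2(-\frac{\pi}{2}),P_2(\frac{\pi}{2})\right)$, so a (necessarily unique) solution $\theta_2$ exists iff $P_1(\theta_1)$ lies in that interval. I would then verify that the admissible range $\theta_1 \in (\theta_c^-,\theta_c^+)$ is exactly the preimage of this condition under $P_1$: if $\theta_c^+$ exists then $P_1(\theta_c^+) = P_2(\frac{\pi}{2})$, so by strict monotonicity of $P_1$ one has $P_1(\theta_1) < P_2(\frac{\pi}{2}) \iff \theta_1 < \theta_c^+$; if $\theta_c^+$ does not exist then Lemma~\ref{lem:P} gives $P_1(\frac{\pi}{2}) \leq P_2(\frac{\pi}{2})$, whence $P_1(\theta_1) < P_2(\frac{\pi}{2})$ holds automatically for all $\theta_1 < \frac{\pi}{2}$, in agreement with the replacement of $\theta_c^+$ by $\frac{\pi}{2}$. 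The lower endpoint is symmetric, and $\theta_2 = P_2^{-1}(P_1(\theta_1))$ is the unique refraction angle.

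\emph{Step 2, the trajectory version.} As $F_1$- and $F_2$-geodesics are straight lines, a refracted trajectory from $q_1 = (x_1,y_1)$ with $x_1 < 0$ to $q_2 = (x_2,y_2)$ with $x_2 > 0$ is determined by its break point $(0,y) \in \eta$, being the concatenation of the segment $q_1 \to (0,y)$ and the segment $(0,y) \to q_2$ subject to Snell's law at $(0,y)$. The corresponding angles are $\theta_1(y) = \arctan\frac{y-y_1}{-x_1}$ and $\theta_2(y) = \arctan\frac{y_2-y}{x_2}$, both valued in $(-\frac{\pi}{2},\frac{\pi}{2})$; the former is strictly increasing in $y$ with $\theta_1(y) \to \pm\frac{\pi}{2}$ as $y \to \pm\infty$, the latter strictly decreasing with $\theta_2(y) \to \mp\frac{\pi}{2}$. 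I would set $\Phi(y) := P_1(\theta_1(y)) - P_2(\theta_2(y))$ and use the strict monotonicity of the $P_j$ to conclude that $\Phi$ is continuous and strictly increasing, with $\Phi(y) \to \frac{1}{V_1(\pi/2)} + \frac{1}{V_2(-\pi/2)} > 0$ as $y \to +\infty$ and $\Phi(y) \to -\frac{1}{V_1(-\pi/2)} - \frac{1}{V_2(\pi/2)} < 0$ as $y \to -\infty$. The intermediate value theorem together with strict monotonicity then yields a unique zero $y^*$ of $\Phi$, i.e. a unique break point satisfying Snell's law, hence a unique refracted trajectory.

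\emph{Main obstacle and consistency.} No step is analytically deep: the whole argument is a monotonicity-and-intermediate-value computation built on Lemma~\ref{lem:P}. The care needed is bookkeeping rather than analysis — fixing the opposite monotonicity directions of $\theta_1(y)$ and $\theta_2(y)$ (and thus the signs of the two limits of $\Phi$), and uniformly absorbing the cases where one or both critical angles fail to exist into the replacement convention. Finally I would note that the two formulations are consistent: the incidence angle $\theta_1(y^*)$ found in Step 2 automatically satisfies $P_1(\theta_1(y^*)) = P_2(\theta_2(y^*)) \in \left(P_2(-\frac{\pi}{2}),P_2(\frac{\pi}{2})\right)$, hence $\theta_1(y^*) \in (\theta_c^-,\theta_c^+)$ by Step 1.
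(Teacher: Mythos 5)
Your proposal is correct, and its first step coincides with the paper's own proof: the paper likewise deduces from Lemma~\ref{lem:P} that $P_2(-\frac{\pi}{2}) < P_1(\theta_1) < P_2(\frac{\pi}{2})$ for admissible $\theta_1$, then invokes the strict monotonicity of $P_2$ to get a unique $\theta_2$ solving \eqref{eq:snell_P}. Where you genuinely diverge is the second assertion (the ``equivalently'' clause about points $q_1 \in Q_1$, $q_2 \in Q_2$). The paper dispatches this in one sentence: it observes that the correspondence $\theta_1 \leftrightarrow \theta_2$ is onto, so that for fixed $q_1$ the family of refracted rays, being straight lines, covers $Q_2$ entirely -- a covering argument that leaves the uniqueness of the trajectory through a given $q_2$ implicit (it tacitly relies on distinct refracted rays from $q_1$ not crossing in $Q_2$). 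Your Step~2 instead reduces the trajectory problem to a one-dimensional root-finding problem in the break-point ordinate $y$, showing that $\Phi(y) = P_1(\theta_1(y)) - P_2(\theta_2(y))$ is continuous, strictly increasing, and changes sign, so the intermediate value theorem gives existence and strict monotonicity gives uniqueness in a single stroke. This buys a fully rigorous proof of both existence \emph{and} uniqueness of the refracted trajectory -- arguably filling a small gap in the published argument -- at the cost of slightly more bookkeeping (the opposite monotonicities of $\theta_1(y)$ and $\theta_2(y)$ and the sign of the limits of $\Phi$, all of which you handle correctly); your closing consistency check, that $\theta_1(y^*)$ automatically lands in $(\theta_c^-,\theta_c^+)$, is also a worthwhile addition that the paper does not spell out.
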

\begin{proof}
Given $ \theta_1 \in (\theta_c^-,\theta_c^+) $ (or $ \theta_1 \in (-\frac{\pi}{2},\frac{\pi}{2}) $ if the critical angles do not exist), from Lemma~\ref{lem:P} we know that $ P_2(-\frac{\pi}{2}) < P_1(\theta_1) < P_2(\frac{\pi}{2}) $, so there exists $ \theta_2 \in (-\frac{\pi}{2},\frac{\pi}{2}) $ satisfying \eqref{eq:snell_P} and since $ P_2(\theta) $ is strictly increasing, this angle is necessarily unique. Note that the converse is also true, i.e. given any $ \theta_2 \in (-\frac{\pi}{2},\frac{\pi}{2}) $ we can find $ \theta_1 \in (\theta_c^-,\theta_c^+) $ (or $ \theta_1 \in (-\frac{\pi}{2},\frac{\pi}{2}) $) satisfying Snell's law. In $ \mathds{R}^2 $ with straight lines as geodesics, this ensures that all the refracted trajectories cover $ Q_2 $ entirely, independently of the initial point $ q_1 \in Q_1 $.
\end{proof}

Of course, one can follow the same steps and derive an analogous result for the existence and uniqueness of solutions to the law of reflection, which reads $ P_1(\theta_1) = P_1(\theta_3) $ in terms of the raypath parameter.

\begin{thm}
\label{th:existence_reflection}
Given any angle of incidence $ \theta_1 \in (-\frac{\pi}{2},\frac{\pi}{2}) $, there exists a unique angle of reflection $ \theta_3 \in (-\pi,-\frac{\pi}{2}) \cup (\frac{\pi}{2},\pi] $ satisfying the law of reflection. Equivalently, given any $ q_1, q_2 \in Q_1 $, there exists a unique refracted trajectory $ \gamma \in \mathcal{N}^* $ joining both points.
\end{thm}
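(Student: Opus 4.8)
The plan is to mirror the proof of Theorem~\ref{th:existence_refraction}, the only genuinely new ingredient being a description of the raypath parameter $P_1$ on the \emph{reflection} range of angles rather than on the incidence range. Writing the law of reflection as $P_1(\theta_1)=P_1(\theta_3)$, I would first observe that the derivative computation carried out in the proof of Lemma~\ref{lem:P} is valid verbatim for every direction on the unit circle, not merely for $\theta\in(-\frac{\pi}{2},\frac{\pi}{2})$: the bracketed term there is the second fundamental form of the indicatrix $\Sigma_1$, which is positive \emph{everywhere} by strong convexity, so that $\frac{dP_1}{d\theta}$ has the sign of $\cos\theta$ at every $\theta$. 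Consequently $P_1$ is strictly increasing where $\cos\theta>0$ (the incidence arc) and strictly decreasing where $\cos\theta<0$ (the reflection arc), with $\theta=\pm\frac{\pi}{2}$ as its only critical points.

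The key step is then to verify that $P_1$, restricted to the reflection arc $(\frac{\pi}{2},\pi]\cup(-\pi,-\frac{\pi}{2})$, is a strictly monotone continuous bijection onto the open interval in which $P_1(\theta_1)$ lives. Since $V_1$ is smooth and positive on the whole circle of directions, $P_1$ is smooth there, and in particular continuous across the identification $\theta=\pi\equiv-\pi$. Parametrizing the reflection arc by a single increasing parameter running from $\frac{\pi}{2}$ through $\pi\equiv-\pi$ to $-\frac{\pi}{2}$, the sign computation above shows $P_1$ is strictly decreasing along it, with boundary limits $\frac{1}{V_1(\pi/2)}$ and $-\frac{1}{V_1(-\pi/2)}$ (not attained, as $\pm\frac{\pi}{2}$ are excluded). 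Hence $P_1$ maps the reflection arc bijectively onto $\left(-\frac{1}{V_1(-\pi/2)},\frac{1}{V_1(\pi/2)}\right)$. By Lemma~\ref{lem:P} this interval equals $(P_1(-\frac{\pi}{2}),P_1(\frac{\pi}{2}))$, which contains $P_1(\theta_1)$ for every $\theta_1\in(-\frac{\pi}{2},\frac{\pi}{2})$; so there is a unique $\theta_3$ in the reflection arc solving $P_1(\theta_3)=P_1(\theta_1)$, and by construction $v_{\theta_3}$ points to the $Q_1$-side, as required.

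For the equivalent geometric statement I would argue variationally. Because $F_1$ is position-independent, the traveltime of the broken path $q_1\to p\to q_2$ through $p=(0,y_0)\in\eta$ is $T(y_0)=F_1(p-q_1)+F_1(q_2-p)$, and by Theorem~\ref{th:reflection} its critical points are exactly the reflected trajectories. Each summand is the restriction of the Minkowski norm $F_1$ to a vertical line that misses the origin (the first at abscissa $-q_1^x>0$, the second at $q_2^x<0$, using $q_1,q_2\in Q_1$); strong convexity of the indicatrix makes $F_1$ strictly convex along any line avoiding the origin, so $T$ is strictly convex in $y_0$. Coercivity is immediate since $F_1\geq c\,\|\cdot\|_g$ for some $c>0$ (as $V_1$ is bounded on the compact circle), whence $T(y_0)\to\infty$ as $|y_0|\to\infty$. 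A strictly convex coercive function has a unique critical point, which is the unique reflected trajectory.

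The main obstacle is that, unlike the classical reversible case where $\sin\theta_1=\sin\theta_3$ forces $\theta_3=\pi-\theta_1$, a general Minkowski norm $F_1$ need not be reversible, so no such symmetry is available; everything hinges on establishing that $P_1$ is \emph{monotone on the reflection arc} (where the naive global picture is non-monotone, $P_1$ rising on the incidence arc and falling on the reflection arc) and that its image there coincides precisely with the incidence values. Care is also needed at the antipodal direction $\theta=\pi\equiv-\pi$, where the two written sub-intervals of the reflection range meet and one must invoke smoothness of $P_1$ on the full circle to treat the arc as a single monotone branch.
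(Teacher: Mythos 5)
Your proof is correct. Note first that the paper itself writes no proof for this theorem: it only remarks, just before the statement, that ``one can follow the same steps'' as in Theorem~\ref{th:existence_refraction}, with the law of reflection written as $P_1(\theta_1)=P_1(\theta_3)$. Your first half is exactly those steps made explicit, and it supplies the one ingredient the paper leaves implicit: Lemma~\ref{lem:P} is stated only on $[-\frac{\pi}{2},\frac{\pi}{2}]$, whereas here one needs the behaviour of $P_1$ on the reflection arc. Your observation that the derivative formula $\frac{dP_1}{d\theta}=\frac{\cos\theta}{V_1^3}\bigl[V_1^2+2(V_1')^2-V_1V_1''\bigr]$ holds for every direction, with positive bracket by strong convexity of $\Sigma_1$, so that $P_1$ is strictly decreasing along the arc $\{\cos\theta<0\}$ (treated as a single branch through $\theta=\pi\equiv-\pi$ using smoothness and periodicity of $P_1$) with image precisely $\bigl(P_1(-\tfrac{\pi}{2}),P_1(\tfrac{\pi}{2})\bigr)$, is the correct and necessary extension; existence and uniqueness of $\theta_3$ then follow exactly as in the paper's proof of Theorem~\ref{th:existence_refraction}. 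Where you genuinely diverge is the ``equivalently'' clause about points: the paper's template for that part (see the end of the proof of Theorem~\ref{th:existence_refraction}) is a covering argument -- monotonicity makes the one-parameter family of trajectories issuing from $q_1$ sweep the target region injectively -- and the analogous statement for reflected rays is what the paper tacitly intends. You instead reduce to the one-variable function $T(y_0)=F_1\bigl(p-q_1\bigr)+F_1\bigl(q_2-p\bigr)$, $p=(0,y_0)\in\eta$, and prove it strictly convex (both vertical lines in velocity space miss the origin, where $\hbox{Hess}\, F_1$ degenerates only radially) and coercive, hence possessing a unique critical point, identified with the unique reflected trajectory via Theorem~\ref{th:reflection}. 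This is a genuinely different and slightly stronger route: it is independent of the angle parametrization, and it yields at no extra cost that the unique critical point is the global minimum of $T_{\mathcal{N}^*}$, a fact the paper asserts separately at the start of \S~\ref{subsec:new_reflection}. The paper's covering-style argument is shorter once Lemma~\ref{lem:P} is extended; your convexity argument is more self-contained and makes the uniqueness-of-minimizer claim airtight. (One cosmetic point: the theorem's second sentence should read ``reflected trajectory'' rather than ``refracted trajectory''; your proof correctly addresses the reflected one.)
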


\subsection{New reflected trajectories}
\label{subsec:new_reflection}
The existence and uniqueness of refracted and reflected trajectories ensure that, once the initial and final points are fixed, there is only one critical point of $ T_{\mathcal{N}} $ and $ T_{\mathcal{N}^*} $, which therefore must be a global minimum. However, recall that curves in $ \mathcal{N} $ and $ \mathcal{N}^* $ have the restriction that they must intersect $ \eta $ only once. One might wonder if more contact points with $ \eta $ could result in a curve that spends even less time than the usual refracted or reflected trajectory. Namely, we can define new sets $ \hat{\mathcal{N}} $ and $ \hat{\mathcal{N}}^* $ in the same way as $ \mathcal{N} $ and $ \mathcal{N}^* $, respectively, but with the restriction that the (piecewise smooth) curves must meet $ \eta $ at least once (but possibly at more points), and look for the critical points of the traveltime functional on these sets.

Let $ \gamma $ be a curve from $ q_1 $ to $ q_2 $ that meets $ \eta $ for the first time at $ \tau_1 $ and leaves it for the last time at $ \tau_2 $. In order to minimize the traveltime, the curve from $ \gamma(\tau_1) $ to $ \gamma(\tau_2) $ must be the segment of the straight line $ \eta $ joining these points (since it is a geodesic in the present $ \mathds{R}^{2} $-setting under consideration). Now, if $ \gamma \in \hat{\mathcal{N}} $, by the triangle inequality (see e.g. \cite[Thm.~1.2.2]{BCS}) any two-segment trajectory from $ q_1 \in Q_1 $ to $ \gamma(\tau) \in \eta $ (for any $ \tau \in [\tau_1,\tau_2] $) and from there to $ q_2 \in Q_2 $ spends less time than $ \gamma $. So the critical points of $ T_{\hat{\mathcal{N}}} $ must always belong to $ \mathcal{N} $, i.e. they are the usual two-segment refracted trajectories crossing $ \eta $ once. However, if $ \gamma \in \hat{\mathcal{N}}^* $, the travel along $ \eta $ with the metric $ F_{2} $ can be an advantage. A real-world example is when moving between two points in the sea next to the shore: it can be faster to swim first to the shore, then run along the beach line edge and finally swim again to the final point. Obviously, the traveltime on $ \eta $ has to be computed through $ F_{2} $ (otherwise, the usual reflected trajectory and hence also the direct straight line trajectory in $ Q_{1} $ is always faster):
\begin{equation*}
T_{\hat{\mathcal{N}}^*}[\gamma] = \int_{0}^{\tau_1} F_1(\gamma'(t)) dt + \int_{\tau_1}^{\tau_2} F_2(\gamma'(t)) dt + \int_{\tau_2}^{t_0} F_1(\gamma'(t)) dt.
\end{equation*}
Taking this into account, we can now define a suitable variation and apply the calculus of variations in the same way as in \S~\ref{subsec:crit_points}. This gives the following conditions for a time-parametrized curve $ \gamma $ to be a critical point of $ T_{\hat{\mathcal{N}}^*} $:
\begin{itemize}
\item $ \gamma $ is composed of three straight lines: an $ F_1 $-unit segment $ \gamma_1 $ from $ q_1 \in Q_1 $ to $ \gamma(\tau_1) \in \eta $, an $ F_2 $-unit segment $ \gamma_2 \subset \eta $ between $ \gamma(\tau_1) $ and $ \gamma(\tau_2) $, and a final $ F_1 $-unit segment $ \gamma_3 $ from $ \gamma(\tau_2) \in \eta $ to $ q_2 \in Q_1 $ (see Figure~\ref{fig:new_reflection}).

\item At the break points, $ \gamma(t) \equiv (x(t),y(t)) $ (for $ t = \tau_{1} $ and $ t = \tau_{2} $, respectively)  must satisfy
\begin{equation*}
\begin{split}
& \frac{\partial F_1}{\partial\dot{y}}(\tau_1^-) = \frac{\partial F_2}{\partial\dot{y}}(\tau_1^+), \\
& \frac{\partial F_2}{\partial\dot{y}}(\tau_2^-) = \frac{\partial F_1}{\partial\dot{y}}(\tau_2^+).
\end{split}
\end{equation*}
Equivalently, in terms of the (constant) directions $ \theta_1,\theta_2,\theta_3 \in (-\pi,\pi] $ of each segment and the raypath parameter \eqref{eq:raypath}:
\begin{equation}
\label{eq:ref_R2}
P_1(\theta_1) = P_2(\theta_2) = P_1(\theta_3).
\end{equation}
In particular, this means that $ \theta_1 $ must be critical (so that $ \theta_2 = \pm \frac{\pi}{2} $) and $ \theta_3 $ must be exactly the angle of reflection we would obtain at $ \tau_1 $, being $ \gamma_1 $ the incident trajectory (see Figure~\ref{fig:new_reflection}).
\end{itemize}

If it exists, this three-segment trajectory is always faster than the usual reflection. However, its existence is not guaranteed: it depends on whether the critical angle exists (which in turn depends on whether the velocity $ V_{2} $ on $ \eta $ is greater than $ V_{1} $ on $ Q_1 $; recall Lemma~\ref{lem:P}) and, if it does, on the initial and final points. So, fixing two points $ q_1, q_2 \in Q_1 $, the global minimum of $ T_{\hat{\mathcal{N}}^*} $ is the three-segment trajectory satisfying the above conditions, if it exists, or the usual reflected trajectory, if it does not. This global comparison should, of course, be benchmarked against the traveltime for the direct straight line between $ q_{1} $ and $ q_{2} $. This will be done in \S~\ref{subsec:global} below.

\begin{rem}
\label{rem:huygens}
At first glance, these new reflected trajectories might only seem relevant in situations when you can willingly change the directions midway at points of $ \eta $. Obviously, a light ray refracted at the critical angle will continue along $ \eta $ indefinitely, without returning to $ Q_1 $. When computing the whole propagation of a wave though, it is crucial to take into account all the possible trajectories. Indeed, Huygens' principle states that each point of the wavefront can be regarded as an independent source that shoots wave trajectories in all directions. Among all of them, those that globally minimize the traveltime are the ones that keep providing points in the wavefront at later instants of times. So, a curve such as $ \gamma $ in Figure~\ref{fig:new_reflection} can indeed represent a realistic wave trajectory that contributes to the formation of the wavefront (see \S~\ref{subsec:wavefront}).
\end{rem}

\begin{figure}
\centering
\includegraphics[width=1\textwidth]{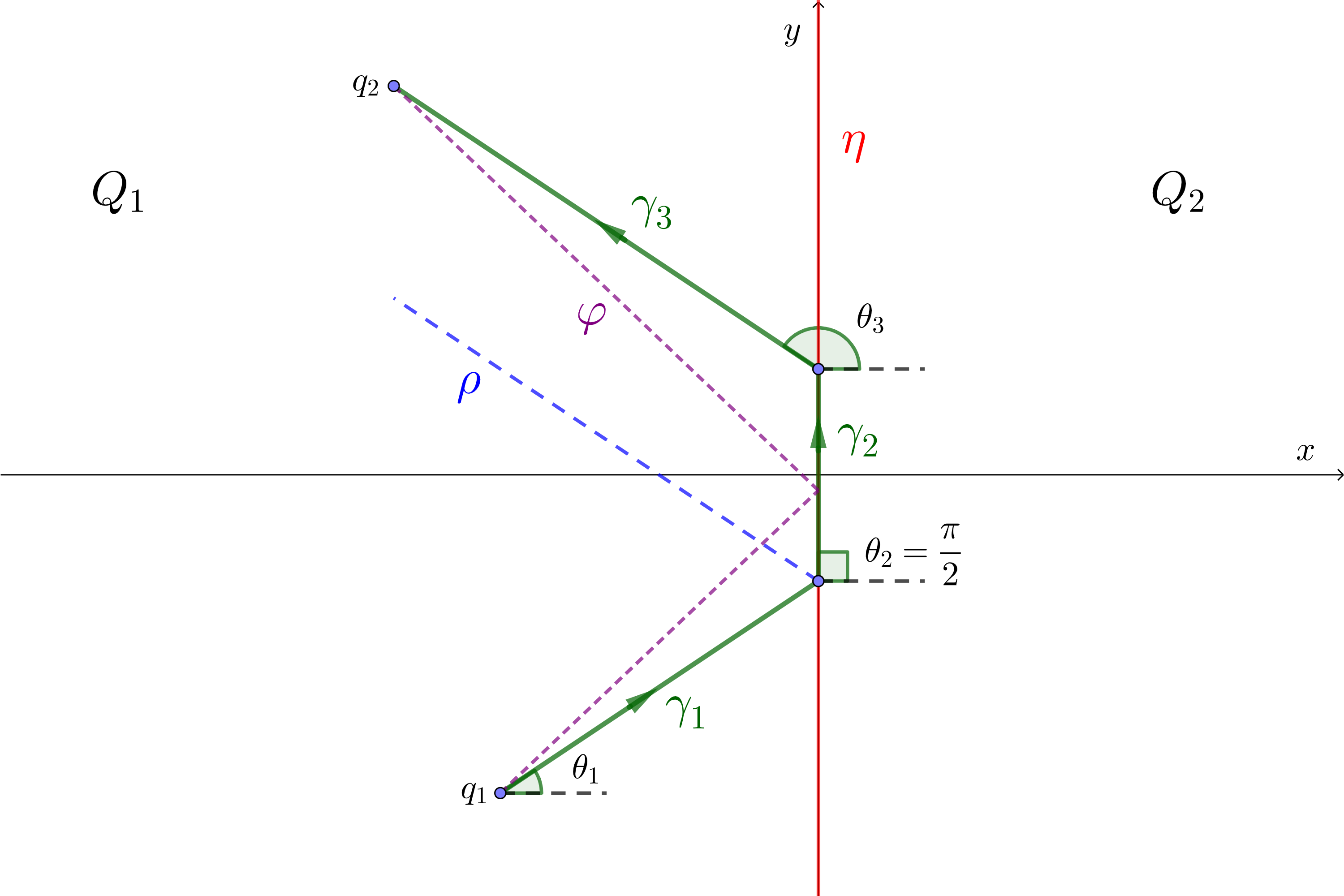}
\caption{There are situations where moving along $ \gamma $ is faster than following the usual reflected trajectory $ \varphi $ or even the straight line between two fixed points $ q_1, q_2 \in Q_1 $. This can only happen if $ \gamma_2 $ is traveled with velocity $ V_2(\frac{\pi}{2}) $ along $ \eta $ and if $ \theta_1 $ and $ \theta_3 $ are, respectively, the critical and reflection angles of $ \gamma_1 $. In particular, $ \gamma_3 $ must be parallel to the reflection $ \rho $ of $ \gamma_1 $.}
\label{fig:new_reflection}
\end{figure}

\subsection{Globally time-minimizing trajectories}
\label{subsec:global}
Taking a step further, we can now remove the restriction that the curves must touch $ \eta $.

\begin{defi}
Fixing any two points $ q_1, q_2 \in Q $, let $ \mathcal{M} $ be the set of all (regular) piecewise smooth curves from $ q_1 $ to $ q_2 $. We say that $ \gamma \in \mathcal{M} $ {\em globally minimizes the traveltime} if $ T[\gamma] \leq T[\varphi] $ for all $ \varphi \in \mathcal{M} $.
\end{defi}

If $ q_1 \in Q_1, q_2 \in Q_2 $, then it is clear that $ \mathcal{M} = \hat{\mathcal{N}} $, so the refracted trajectories always globally minimize the traveltime. This includes the critical trajectories along $ \eta $, as stated in the following result.

\begin{prop}
\label{prop:critical}
If $ \gamma: [0,t_0] \rightarrow Q $ is a critical trajectory (associated with one of the critical angles $ \theta_c^\pm $), then $ \gamma $ globally minimizes the traveltime between $ \gamma(0) \in Q_1 $ and $ \gamma(t_0) \in \eta $.
\end{prop}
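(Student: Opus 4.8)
The plan is to derive the global minimality of the critical trajectory as a \emph{limit} of the global minimality of genuine refracted trajectories, which was already established in \S~\ref{subsec:global} for pairs of points $q_1\in Q_1$, $q_2\in Q_2$. Throughout I assume without loss of generality that $\gamma$ is associated with $\theta_c^+$, so that after meeting $\eta$ for the first time at a point $p$ with the critical angle, its refracted part runs along $\eta$ in the direction $\theta_2=\tfrac{\pi}{2}$ (the $+y$ direction) until it reaches $q_2:=\gamma(t_0)\in\eta$; the case $\theta_c^-$ is symmetric. Write $y_p$ and $y_{q_2}$ for the respective $y$-coordinates, so $y_{q_2}>y_p$.

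First I would build a one-parameter family of ordinary refracted trajectories $\gamma_\varepsilon$ approximating $\gamma$. Fixing $q_1$, choose incidence angles $\theta_1^\varepsilon\uparrow\theta_c^+$; the incident $F_1$-segment from $q_1$ at angle $\theta_1^\varepsilon$ is a straight line meeting $\eta$ at a point $p_\varepsilon\to p$, and by Lemma~\ref{lem:P} the associated refraction angle $\theta_2^\varepsilon=P_2^{-1}(P_1(\theta_1^\varepsilon))$ is well defined and depends continuously on $\theta_1^\varepsilon$, with $\theta_2^\varepsilon\uparrow\tfrac{\pi}{2}$ since $P_1(\theta_c^+)=P_2(\tfrac{\pi}{2})$. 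I then place the endpoint $q_2^\varepsilon\in Q_2$ on the refracted ray issuing from $p_\varepsilon$ in the direction $\theta_2^\varepsilon$, at the parameter value that makes its $y$-coordinate equal to $y_{q_2}$. Because $\cos\theta_2^\varepsilon\to0$ while the $y$-displacement stays bounded, $q_2^\varepsilon\to q_2$ and $\gamma_\varepsilon\to\gamma$ segment by segment.

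Next I would check that the traveltimes converge, $T[\gamma_\varepsilon]\to T[\gamma]$. The incident part contributes the $F_1$-distance from $q_1$ to $p_\varepsilon$, which tends to the $F_1$-distance from $q_1$ to $p$ by continuity of the Minkowski norm $F_1$; the refracted part contributes the $F_2$-length of the straight segment $[p_\varepsilon,q_2^\varepsilon]$, which tends to the $F_2$-length of the segment of $\eta$ from $p$ to $q_2$, i.e.\ to $(y_{q_2}-y_p)/V_2(\tfrac{\pi}{2})$. Since each $\gamma_\varepsilon$ joins $q_1\in Q_1$ to $q_2^\varepsilon\in Q_2$, the result recalled in \S~\ref{subsec:global} (namely $\mathcal{M}=\hat{\mathcal{N}}$, with refracted trajectories globally minimizing) guarantees $T[\gamma_\varepsilon]\le T[\psi]$ for every curve $\psi$ from $q_1$ to $q_2^\varepsilon$.

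Finally I would transfer minimality to the limit, uniformly over all competitors without case analysis. Given any $\varphi\in\mathcal{M}$ from $q_1$ to $q_2\in\eta$, append the straight $F_2$-segment $[q_2,q_2^\varepsilon]\subset\overline{Q}_2$ to obtain a curve $\varphi_\varepsilon$ from $q_1$ to $q_2^\varepsilon$; this is an admissible competitor since it meets $\eta$ at $q_2$, and $T[\varphi_\varepsilon]=T[\varphi]+F_2(q_2^\varepsilon-q_2)\to T[\varphi]$. Combining with the global minimality of $\gamma_\varepsilon$ gives $T[\gamma_\varepsilon]\le T[\varphi_\varepsilon]$, and letting $\varepsilon\to0$ yields $T[\gamma]\le T[\varphi]$, which is exactly the claim. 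The main obstacle is the construction of the approximating family together with the verification that both the curves and their traveltimes converge to those of $\gamma$; this rests on the continuity and strict monotonicity of $P_2$ from Lemma~\ref{lem:P} (which forces $\theta_2^\varepsilon\to\tfrac{\pi}{2}$) and on the control $\cos\theta_2^\varepsilon\to0$, which collapses the refracted rays onto $\eta$ while keeping the traveltime integrals convergent.
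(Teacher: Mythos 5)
Your proof is correct and follows essentially the same route as the paper's: both approximate the critical trajectory by genuine refracted trajectories ending at points of $Q_2$ that converge to $\gamma(t_0)$, invoke their known global minimality, and transfer it to the limit by appending a short horizontal segment in $\overline{Q}_2$ to any competitor. The only differences are cosmetic -- you index the approximating family by the incidence angle and argue directly via limits, while the paper indexes it by the endpoint $(\delta,y_0)$, cites the smoothness of the raypath parameter for the traveltime convergence, and phrases the same comparison as a contradiction.
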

\begin{proof}
Given $ \gamma(t_0) = (0,y_0) \in \eta $, suppose there exists another curve $ \xi_1 \in \mathcal{M} $ that arrives earlier at this point from $ \gamma(0) \in Q_1 $. Consider now $ (\delta,y_0) \in Q_2 $ with $ \delta > 0 $ and let $ \varphi_{\delta} $ be the unique refracted trajectory from $ \gamma(0) $ to $ (\delta,y_0) $, which globally minimizes the traveltime. The smoothness of the raypath parameter given by \eqref{eq:raypath} ensures that
\begin{equation*}
\lim_{\delta \rightarrow 0} T[\varphi_{\delta}] = T[\gamma] > T[\xi_1].
\end{equation*}
So, for a sufficiently small $ \delta > 0 $, the straight line $ \xi_2 $ from $ (0,y_0) $ to $ (\delta,y_0) $ satisfies that $ T[\varphi_{\delta}] > T[\xi_1]+T[\xi_2] $, which means that we have built a path in $ \mathcal{M} $ faster than $ \varphi_{\delta} $, arriving at a contradiction.
\end{proof}

If $ q_1, q_2 \in Q_1 $ though, the global minimum of $ T_{\hat{\mathcal{N}}^*} $ competes against the straight line in $ Q_1 $ between $ q_1 $ and $ q_2 $. It is clear that the straight line is always faster than the usual reflection, due to the triangle inequality for the metric $ F_{1} $ in $ Q_{1} $, but in some situations the three-segment reflected trajectory can globally minimize the traveltime between $ q_{1} $ and $ q_{2} $, thus creating a point on the cut locus from $ q_{1} $ -- we elaborate further on this issue in \S~\ref{subsec:cutlocus} below. We first illustrate the setting for these possibilities next with an example.

\subsection{Anisotropic example}
\label{subsec:example}
Let us consider the following velocity functions in $ \mathds{R}^{2} $:
\begin{equation*}
V_j(\theta) = \frac{a_j(1-\varepsilon_j^2)}{1-\varepsilon_j \cos(\theta-\phi_j)}, \qquad j=1,2,
\end{equation*}
which define ellipses of eccentricity $ \varepsilon_j $, respectively, centered at one of their foci, with semi-major axis $ a_j $ oriented in the direction $ \phi_j $ (with respect to the $ \dot{x} $-axis) and such that $ \phi_j = 0 $ points to their other respective focus point. Pecisely, $ V_j(\theta) $ is the (Euclidean) length of the vector from the centered focus to the ellipse in the direction $ \theta $, and this ellipse becomes the indicatrix of the Finsler metric\footnote{This type of Finsler metrics are used to model the spread of wildfires (see e.g. \cite{F,JPS2}).}
\begin{equation*}
F_j(\dot{x},\dot{y}) = \frac{\sqrt{\dot{x}^2+\dot{y}^2}}{a_j(1-\varepsilon_j^2)} \left[ 1-\varepsilon_j \cos\left(\arctan\left(\frac{\dot{y}}{\dot{x}}\right)-\phi_j\right) \right], \qquad j=1,2.
\end{equation*}
One can easily verify that
\begin{equation*}
\frac{\partial F_j}{\partial \dot{y}}(\theta) = \frac{\sin{\theta}-\varepsilon_j\sin{\phi_j}}{a_j(1-\varepsilon_j^2)}, \qquad j=1,2,
\end{equation*}
so \eqref{eq:ref_R2} becomes
\begin{equation*}
\frac{\partial F_1}{\partial\dot{y}}(\theta_k) = \frac{\partial F_2}{\partial\dot{y}}(\theta_2) \quad \Rightarrow \quad \frac{\sin{\theta_k}-\varepsilon_1\sin{\phi_1}}{a_1(1-\varepsilon_1^2)} = \frac{\sin{\theta_2}-\varepsilon_2\sin{\phi_2}}{a_2(1-\varepsilon_2^2)},
\end{equation*}
for $ k = 1,3 $. In particular, in order to find the three-segment reflected trajectory, $ \theta_1 $ must be the (critical) angle such that $ \theta_2 = \pm \frac{\pi}{2} $, i.e.
\begin{equation}
\label{eq:crit_angle}
\theta_1 = \arcsin\left( \frac{a_1(1-\varepsilon_1^2)}{a_2(1-\varepsilon_2^2)}(\pm 1-\varepsilon_2 \sin{\phi_2}) + \varepsilon_1 \sin{\phi_1} \right),
\end{equation}
and $ \theta_3 $ must satisfy the same equation but pointing to $ Q_1 $.\footnote{Even though the velocities are anisotropic, the law of reflection in this example is the same as in the case with Euclidean metrics (see Example~\ref{ex:classical_reflection}). However, this is not true in general.}

Now, suppose we want to go from $ q_1 = (-x_0,-y_0) $ to $ q_2 = (-x_0,y_0) $, for some $ x_0, y_0 > 0 $, and let us set $ a_1=a_2=1 $, $ \varepsilon_1=\varepsilon_2=\frac{1}{2} $, $ \phi_1 = 0 $ and $ \phi_2 = \frac{\pi}{2} $. Then \eqref{eq:crit_angle} becomes $ \theta_1 = \arcsin\left(\pm 1-\frac{1}{2}\right) $, which does not have solution for the $ - $ sign (corresponding to $ \theta_2 = -\frac{\pi}{2} $), but it does for the $ + $ sign (corresponding to $ \theta_2 = \frac{\pi}{2} $). Namely, there is only one critical angle $ \theta_1 = \theta_c^+ = \arcsin{\frac{1}{2}} = \frac{\pi}{6} $ and thus, $ \theta_3 = \pi-\theta_1 = \frac{5\pi}{6} $. We can now compute the intersection points with $ \eta $ to see if the three-segment reflection exists. Since we can completely determine $ \gamma_1 $ and $ \gamma_3 $ (we know $ q_1, q_2 $ and $ \theta_1, \theta_3 $), it is easy to show that
\begin{equation*}
\gamma_1 \cap \eta = \left( 0,-y_0+\frac{x_0}{\sqrt{3}} \right), \qquad \gamma_3 \cap \eta = \left( 0,y_0-\frac{x_0}{\sqrt{3}} \right).
\end{equation*}
We conclude (see Figure~\ref{fig:example}):
\begin{itemize}
\item If $ y_0 > \frac{x_0}{\sqrt{3}} $, then the three-segment curve exists and it is the global minimum of $ T_{\hat{\mathcal{N}}^*} $.

\item If $ y_0 = \frac{x_0}{\sqrt{3}} $, then $ \gamma_2 $ reduces to the point $ (0,0) $ and $ \gamma_3 $ becomes the reflection of $ \gamma_1 $ corresponding to the critical angle.

\item If $ y_0 < \frac{x_0}{\sqrt{3}} $, then you cannot reach $ q_2 $ through a three-segment curve with the given angles $ \theta_1, \theta_2, \theta_3 $. In this case, the global minimum of $ T_{\hat{\mathcal{N}}^*} $ is the usual reflected trajectory, with an angle of incidence smaller than the critical angle.
\end{itemize}

Finally, we can find the trajectory that globally minimizes the traveltime. This can be done simply by comparing the traveltimes of the straight line $ \xi $ and the three-segment reflection $ \gamma $ in the case $ y_0 > \frac{x_0}{\sqrt{3}} $ (in any other case the straight line is always the fastest trajectory). Using the endpoints and direction of each segment, they can be parametrized as follows:
\begin{equation*}
\begin{split}
& \xi(t) = (-x_0,-y_0)+t(0,2), \qquad t \in [0,y_0], \\
& \gamma_1(t) = (-x_0,-y_0)+t(\sqrt{3},1), \qquad t \in \left[0,\frac{x_0}{\sqrt{3}}\right], \\
& \gamma_2(t) = \left(0,-y_0+\frac{x_0}{\sqrt{3}}\right)+t(0,2), \qquad t \in \left[0,y_0-\frac{x_0}{\sqrt{3}}\right], \\
& \gamma_3(t) = \left(0,y_0-\frac{x_0}{\sqrt{3}}\right)+t(-\sqrt{3},1), \qquad t \in \left[0,\frac{x_0}{\sqrt{3}}\right]
\end{split}
\end{equation*}
(note that this is not the time-parametrization). Therefore
\begin{equation*}
T[\xi] = \int_0^{y_0} F_1(0,2)dt = \frac{4}{3} y_0,
\end{equation*}
\begin{equation*}
\begin{split}
T[\gamma] = & \int_0^{\frac{x_0}{\sqrt{3}}} F_1(\sqrt{3},1)dt + \int_0^{y_0-\frac{x_0}{\sqrt{3}}} F_2(0,2)dt +\int_0^{\frac{x_0}{\sqrt{3}}} F_1(-\sqrt{3},1)dt = \\
= & \frac{4}{3}\left(\frac{x_0}{\sqrt{3}}(4-\sqrt{3})+1\right),
\end{split}
\end{equation*}
and we conclude (see Figure~\ref{fig:example}):
\begin{itemize}
\item If $ y_0 = \frac{x_0}{\sqrt{3}}(4-\sqrt{3})+1 $, then $ \gamma $ and $ \xi $ reach $ q_2 $ at the same time and both globally minimize the traveltime. In this case $ q_{2} $ belongs to the cut locus of $ q_{1} $ -- see \S~\ref{subsec:cutlocus} below.

\item If $ y_0 > \frac{x_0}{\sqrt{3}}(4-\sqrt{3})+1 $, only $ \gamma $ globally minimizes the traveltime.

\item If $ y_0 < \frac{x_0}{\sqrt{3}}(4-\sqrt{3})+1 $, only $ \xi $ globally minimizes the traveltime.
\end{itemize}

\begin{figure}
\centering
\includegraphics[width=1\textwidth]{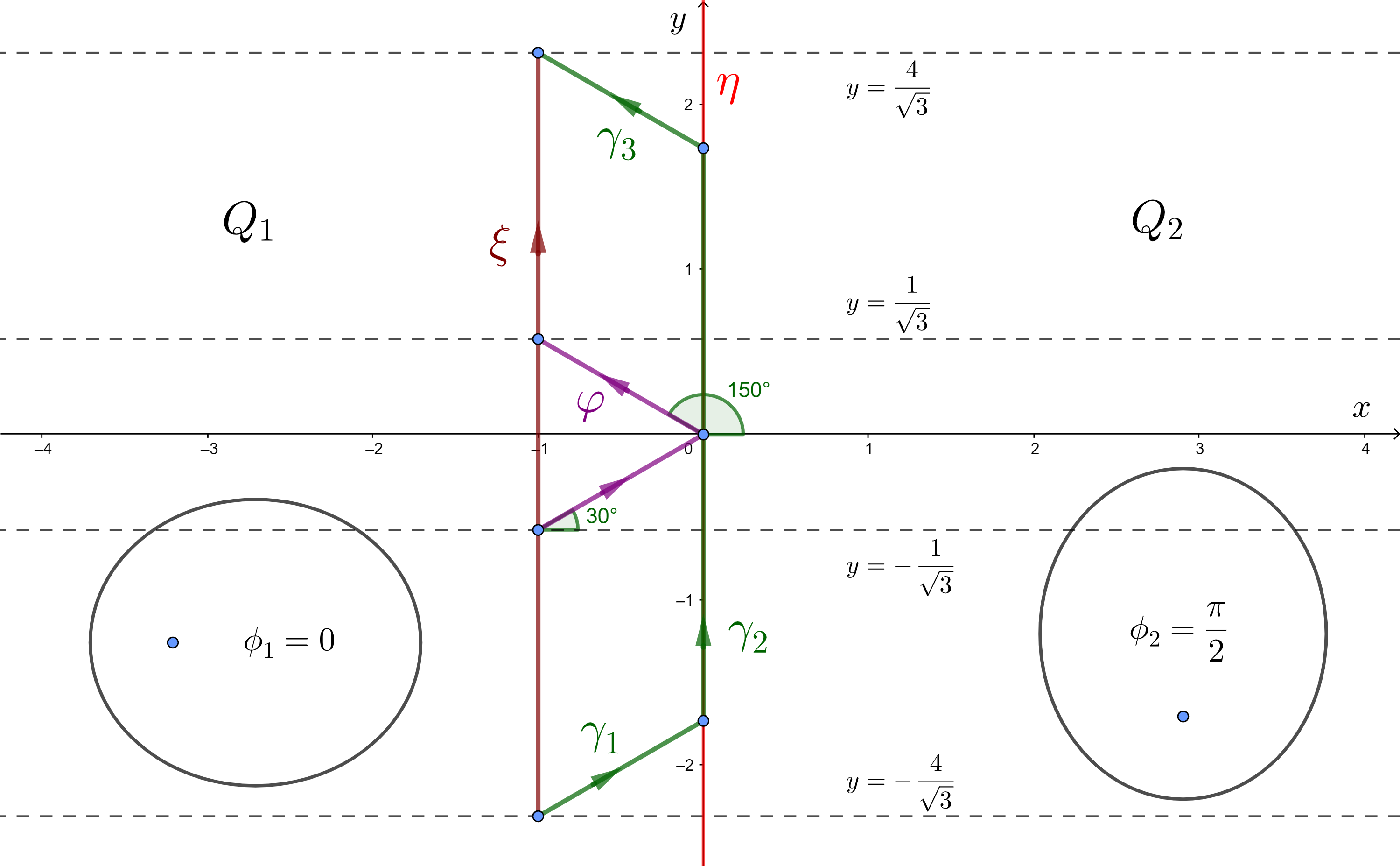}
\caption{In this example, the velocity at each point is given by a focus-centered ellipse with orientation $ \phi_1=0 $ in $ Q_1 $ and $ \phi_2=\frac{\pi}{2} $ in $ Q_2 $, which favors the travel along $ \eta $. Fixing $ x_0 = 1 $ and always considering the travel from $ q_1 = (-1,-y_0) $ to its symmetrical $ q_2 = (-1,y_0) $, there are three different regions for $ y_0 $: if $ 0<y_0<\frac{1}{\sqrt{3}} $, the fastest trajectory via $ \eta $ is the usual reflection passing through $ (0,0) $, with an angle of incidence smaller than the critical one; if $ \frac{1}{\sqrt{3}}<y_0<\frac{4}{\sqrt{3}} $, then the three-segment reflection becomes faster than the usual reflected trajectory, but still in these first two cases $ \xi $ is the fastest path; when $ y_0 > \frac{4}{\sqrt{3}} $, the three-segment reflection becomes even faster than $ \xi $. The limit cases are depicted explicitly: $ \varphi $ when $ y_0 = \frac{1}{\sqrt{3}} $ and $ \gamma $ when $ y_0 = \frac{4}{\sqrt{3}} $.}
\label{fig:example}
\end{figure}

\subsection{Construction of the wavefront}
\label{subsec:wavefront}
Fixing a time $ t_0 > 0 $, consider all the time-parametrized trajectories $ \gamma $ that globally minimize the traveltime from $ q_1 \in Q_1 $ to $ \gamma(t_0) $ (and thus, to any $ \gamma(t) $ with $ t \in (0,t_0] $). They provide the outermost points that can be reached by a wave spreading from $ q_1 $ during the time lapse $ t_0 $, i.e. each one of these trajectories provide a point in the wavefront.

Throughout this section we have seen that only three types of curves can globally minimize the traveltime in the classical Finslerian setting: the straight lines in $ Q_1 $, the refracted trajectories (crossing $ \eta $ once or remaining on it) and the three-segment reflected trajectories. Separately, they provide the {\em standard}, {\em refracted} and {\em reflected} {\em wavefronts}, respectively, which compete against each other to generate the actual wavefront.

Let $ \tau_\eta $ and $ \tau^\pm $ be the times when the standard wavefront and the critical trajectories reach $ \eta $, respectively ($ \tau_\eta $ coincides with the $ F_1 $-distance from $ q_1 $ to $ \eta $). The refracted wavefront only comes into existence from $ \tau_\eta $, and the reflected ones from $ \tau^\pm $.

\begin{lemma}
\label{lem:wavefronts}
Fixing any $ q_1 \in Q_1 $ and a time $ t_0 > \tau^\pm $:
\begin{enumerate}[(i)]
\item The standard wavefront $ \tilde{\Gamma}_{t_0} $ at $ t=t_0 $ is given by the indicatrix of $ F_1 $ scaled up by $ t_0 $, which is equivalent to
\begin{equation}
\label{eq:tilde_Gamma}
\tilde{\Gamma}_{t_0} = \{ p \in \mathds{R}^2: F_1(p-q_1) = t_0 \},
\end{equation}
and $ \tilde{\Gamma}_{t_0} $ intersects $ \eta $ at two points.

\item There exists one reflected wavefront $ \hat{\Gamma}^{\pm}_{t_0} $ at $ t=t_0 $ for each critical angle $ \theta^{\pm}_c $. If $ \varphi^{\pm} $ is the usual reflection at $ \theta^{\pm}_c $ and $ \gamma^{\pm} $ is the corresponding critical trajectory along $ \eta $ (both time-parametrized), then $ \hat{\Gamma}^{\pm}_{t_0} $ is the straight line joining $ \gamma^{\pm}(t_0) $ and $ \varphi^{\pm}(t_0) $.

\item The intersection between $ \tilde{\Gamma}_{t_0} $ and each $ \hat{\Gamma}^{\pm}_{t_0} $ is a unique point, provided that $ \hat{\Gamma}^{\pm}_{t_0} $ exists.
\end{enumerate}
\end{lemma}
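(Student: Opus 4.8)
The plan for part (i) is to parametrize the standard wavefront directly from the straight-line geodesics. Since $ F_1 $ is position-independent, every time-parametrized standard ray issuing from $ q_1 $ has the form $ t \mapsto q_1 + t\,w $ with $ F_1(w) = 1 $, so $ w $ ranges over the indicatrix $ \Sigma_1 $ of $ F_1 $. I would therefore identify the wavefront at $ t_0 $ with the scaled, translated indicatrix $ q_1 + t_0\,\Sigma_1 $, and recover \eqref{eq:tilde_Gamma} by setting $ p = q_1 + t_0 w $ and using the positive $ 1 $-homogeneity of $ F_1 $, which turns $ F_1(w)=1 $ into $ F_1(p-q_1)=t_0 $. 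For the two intersection points, I would use that $ \{p : F_1(p-q_1)\le t_0\} $ is a bounded convex region whose boundary $ \tilde\Gamma_{t_0} $ is strictly convex (a scaled copy of the strongly convex $ \Sigma_1 $). Because the critical trajectory is one particular $ F_1 $-geodesic reaching $ \eta $, its arrival time satisfies $ \tau_\eta \le \tau^\pm < t_0 $, so the nearest point of $ \eta $ to $ q_1 $ lies in the open region $ \{F_1(\cdot-q_1)<t_0\} $. A line through an interior point of a bounded strictly convex region meets its boundary in exactly two points, giving the claim.

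For part (ii) I would invoke Huygens' principle along the critical trajectory, as in \S~\ref{subsec:new_reflection}. Assuming the critical angle $ \theta_c^\pm $ (hence the reflected wavefront) exists, the critical trajectory $ \gamma^\pm $ runs as an $ F_1 $-unit segment to $ \gamma^\pm(\tau^\pm)\in\eta $ and then along $ \eta $ as an $ F_2 $-unit segment. From each point $ \gamma^\pm(s) $, $ s\in[\tau^\pm,t_0] $, a three-segment reflected ray peels off into $ Q_1 $ in the fixed reflection direction $ \theta_3 $ at $ F_1 $-unit speed, reaching at time $ t_0 $ the point $ \gamma^\pm(s)+(t_0-s)\,w_3 $, where $ w_3 $ is the $ F_1 $-unit vector of direction $ \theta_3 $. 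Writing $ \gamma^\pm(s)=\gamma^\pm(\tau^\pm)+(s-\tau^\pm)\,b^\pm $ with $ b^\pm $ the constant Euclidean velocity along $ \eta $, this point equals $ [\gamma^\pm(\tau^\pm)-\tau^\pm b^\pm+t_0 w_3]+s\,(b^\pm-w_3) $, which is affine in $ s $; hence the reflected wavefront is a straight segment. Evaluating at $ s=t_0 $ gives the endpoint $ \gamma^\pm(t_0) $, and at $ s=\tau^\pm $ gives $ \gamma^\pm(\tau^\pm)+(t_0-\tau^\pm)w_3=\varphi^\pm(t_0) $, the two endpoints claimed.

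For part (iii) the strategy is to show that $ \hat\Gamma^\pm_{t_0} $ joins the interior to the exterior-or-boundary of the convex region bounded by $ \tilde\Gamma_{t_0} $, then read off uniqueness from strict convexity. The endpoint $ \varphi^\pm(t_0) $ is reached by a genuinely broken two-segment path whose directions $ \theta_c^\pm $ and $ \theta_3 $ point to opposite sides of $ \eta $ and are thus not parallel, so the strict triangle inequality for the Minkowski norm $ F_1 $ gives $ F_1(\varphi^\pm(t_0)-q_1)<\tau^\pm+(t_0-\tau^\pm)=t_0 $, placing $ \varphi^\pm(t_0) $ strictly inside $ \tilde\Gamma_{t_0} $. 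The endpoint $ \gamma^\pm(t_0)\in\eta $ is, by Proposition~\ref{prop:critical}, globally time-minimizing and reached at time $ t_0 $; comparing with the straight $ F_1 $-line from $ q_1 $ (a competitor in $ \mathcal{M} $ lying in $ \overline{Q}_1 $) yields $ t_0\le F_1(\gamma^\pm(t_0)-q_1) $, so $ \gamma^\pm(t_0) $ lies on or outside $ \tilde\Gamma_{t_0} $. A segment from an interior point to an exterior-or-boundary point of a bounded strictly convex region meets the boundary exactly once, which is the asserted uniqueness. I expect the main obstacle to be exactly this straddling argument in (iii): parts (i) and (ii) are bookkeeping once the parametrizations are fixed, whereas (iii) requires both the strictness of the triangle inequality (i.e. strong convexity of the indicatrix) for the inside claim and the global minimality of Proposition~\ref{prop:critical} for the outside claim, assembled with the convexity of $ \tilde\Gamma_{t_0} $.
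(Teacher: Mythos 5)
Your proposal is correct and takes essentially the same approach as the paper's proof: the scaled indicatrix plus convexity for (i), sweeping out the time-$t_0$ endpoints of the family of three-segment reflected trajectories for (ii) (the paper phrases this via Thales' theorem, you via an affine parametrization in the departure time, which is the same computation), and the inside/outside straddling of $\tilde{\Gamma}_{t_0}$ by the endpoints of $\hat{\Gamma}^{\pm}_{t_0}$ combined with strict convexity for (iii). If anything, your (iii) is slightly more careful than the paper's: where the paper asserts the strict inequality $t_0 < F_1(\gamma^{\pm}(t_0)-q_1)$, you note that global minimality (Proposition~\ref{prop:critical}) only directly yields $t_0 \leq F_1(\gamma^{\pm}(t_0)-q_1)$, and you correctly observe that this weaker on-or-outside statement still forces a unique intersection point.
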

\begin{proof}
For the claim $ (i) $, if we consider $ F_1 $-unit speed straight lines $ \xi $, note that they travel a distance $ t_0\|\xi'(t)\|_g $ with $ \|\xi'(t)\|_g $ constant and $ F_1(\xi'(t)) = 1 $, so $ t_0\Sigma_1 $ provides the distance traveled by $ \xi $ in all directions, being $ \Sigma_1 $ the indicatrix of $ F_1 $. Centered at $ q_1 $, $ t_0\Sigma_1 $ gives the points \eqref{eq:tilde_Gamma}. Also, as $ t_0 > \tau^\pm \geq \tau_\eta $, the strong convexity of $ \Sigma_1 $ ensures that $ \tilde{\Gamma}_{t_0} $ intersects $ \eta $ at exactly two points.

We will prove $ (ii) $ and $ (iii) $ for $ \hat{\Gamma}^+_{t_0} $, being the proof for $ \hat{\Gamma}^-_{t_0} $ completely analogous. For $ (ii) $, let $ \rho $ be any (time-parametrized) three-segment reflected trajectory with angle of incidence $ \theta_1 = \theta_c^+ $. Assume $ \rho $ reaches $ \eta $ at time $ \tau_1 > 0 $ (the same as $ \varphi^+ $ and $ \gamma^+ $) and leaves it at $ \tau_2 $, with $ \tau_1 < \tau_2 < t_0 $. Splitting the curves in their different segments, note that $ \rho_1 = \varphi^+_1 = \gamma^+_1 $, $ \rho_2 $ is part of $ \gamma^+_2 $ and $ \rho_3 $ is parallel to $ \varphi^+_2 $. Thus
\begin{equation*}
\frac{\|\rho_3'(t)\|_g(t_0-\tau_2)}{\|(\gamma_2^+)'(t)\|_g(t_0-\tau_2)} = \frac{\|\rho_3'(t)\|_g}{\|(\gamma_2^+)'(t)\|_g} = \frac{\|(\varphi_2^+)'(t)\|_g}{\|(\gamma_2^+)'(t)\|_g} = \frac{\|(\varphi_2^+)'(t)\|_g(t_0-\tau_1)}{\|(\gamma_2^+)'(t)\|_g(t_0-\tau_1)},
\end{equation*}
which means, by Thales' theorem, that $ \rho(t_0) $ is in the segment $ \hat{\Gamma}^+_{t_0} $ joining $ \gamma^+(t_0) $ and $ \varphi^+(t_0) $. This also ensures that given any point $ p \in \hat{\Gamma}^+_{t_0} $, there exists a three-segment reflected trajectory from $ q_1 $ to $ p $. So $ \hat{\Gamma}^+_{t_0} $ is the reflected wavefront.

For the claim $ (iii) $, note that $ \gamma^+ $ always globally minimizes the traveltime by Proposition~\ref{prop:critical}, but $ \varphi^+ $ never does. In particular, this means that
\begin{equation*}
F_1(\varphi^+(t_0)-q_1)< t_0 < F_1(\gamma^+(t_0)-q_1),
\end{equation*}
so $ \varphi^+(t_0) $ lies inside $ \tilde{\Gamma}_{t_0} $, whereas $ \gamma^+(t_0) $ is outside. Given the fact that $ \hat{\Gamma}^+_{t_0} $ is the straight line joining both points and $ \tilde{\Gamma}_{t_0} $ is strongly convex, there is a unique intersection point.


\end{proof}

Note that $ \tilde{\Gamma}_{t_0} $ and $ \hat{\Gamma}^{\pm}_{t_0} $ can be explicitly parametrized as (smooth) curves in $ \mathds{R}^2 $ as follows:
\begin{equation}
\label{eq:parametrization}
\begin{split}
& \tilde{\Gamma}_{t_0}(\theta) = t_0 V_1(\theta)(\cos\theta,\sin\theta)+q_1, \qquad \theta \in (-\pi,\pi], \\
& \hat{\Gamma}^{\pm}_{t_0}(s) = \gamma^{\pm}(t_0)+s(\varphi^{\pm}(t_0)-\gamma^{\pm}(t_0)), \qquad s \in [0,1],
\end{split}
\end{equation}
and Lemma~\ref{lem:wavefronts} ensures there are exactly two angles $ \theta_{\eta}^{\pm} $ such that $ \tilde{\Gamma}_{t_0}(\theta_{\eta}^{\pm}) \in \eta $ and, if $ \theta_c^{\pm} $ exists, we can find $ s_0^{\pm} $ and $ \theta_0^{\pm} $ such that $ \tilde{\Gamma}_{t_0}(\theta_0^{\pm}) = \hat{\Gamma}_{t_0}(s_0^{\pm}) $. Note that $ \theta_{\eta}^{\pm}$, $ s_0^{\pm} $ and $ \theta_0^{\pm} $ depend on $ t_0 $.

\begin{thm}
\label{th:wavefront}
Assume for simplicity that $ \theta_c^+ $ exists but $ \theta_c^- $ does not (analogous results are obtained in any other case). Then the actual wavefront $ \Gamma_{t_0} $ from $ q_1 \in Q_1 $ at $ t=t_0 $ is a piecewise smooth closed curve composed of the following parts:
\begin{itemize}
\item If $ t_0 \in [0,\tau_\eta] $, then $ \Gamma_{t_0} $ is the whole standard wavefront at $ t=t_0 $ (see Figure~\ref{fig:wave_a}).

\item If $ t_0 \in (\tau_\eta,\tau^+] $, then $ \Gamma_{t_0} = \Gamma_1 \cup \Gamma_2 $, where
\begin{equation*}
\Gamma_1 = \{ \tilde{\Gamma}_{t_0}(\theta) \in \mathds{R}^2: \theta \in (-\pi,\theta_{\eta}^-] \cup [\theta_\eta^+,\pi] \}
\end{equation*}
and $ \Gamma_2 $ is the whole refracted wavefront at $ t=t_0 $ (see Figure~\ref{fig:wave_b}).

\item If $ t_0 > \tau^+ $, then $ \Gamma_{t_0} = \Gamma_1 \cup \Gamma_2 \cup \Gamma_3 $, where
\begin{equation*}
\begin{split}
& \Gamma_1 := \{ \tilde{\Gamma}_{t_0}(\theta) \in \mathds{R}^2: \theta \in (-\pi,\theta_{\eta}^-] \cup [\theta_0^+,\pi] \}, \\
& \Gamma_3 := \{ \hat{\Gamma}^+_{t_0}(s) \in \mathds{R}^2: s \in [0,s_0^+] \}
\end{split}
\end{equation*}
and $ \Gamma_2 $ is the whole refracted wavefront at $ t=t_0 $ (see Figures~\ref{fig:wave_c} and \ref{fig:wave_d}).
\end{itemize}
\end{thm}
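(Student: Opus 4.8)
The plan is to characterize $\Gamma_{t_0}$ as the boundary of the set of points reachable from $q_1$ in globally minimal time at most $t_0$, and then to identify which portion of each partial wavefront actually realizes this minimum. The decisive structural input, established in \S\ref{subsec:global} together with \S\ref{subsec:new_reflection}, is that every globally time-minimizing curve issuing from $q_1$ is of exactly one of three types — a straight $F_1$-segment, a refracted trajectory, or a three-segment reflected trajectory — so that
\begin{equation*}
\Gamma_{t_0} \subseteq \tilde{\Gamma}_{t_0} \cup \Gamma_2 \cup \hat{\Gamma}^+_{t_0},
\end{equation*}
the union of the standard, refracted and reflected wavefronts. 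The whole argument then reduces to deciding, in each time regime, which points of these curves carry a globally minimizing representative, and to verifying that the surviving arcs glue into a single closed piecewise smooth curve.

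First I would dispose of the two elementary regimes. For $t_0 \le \tau_\eta$ the standard wavefront lies entirely in $\overline{Q}_1$, since $\tau_\eta$ is the $F_1$-distance from $q_1$ to $\eta$; no point of $Q_2$ is yet reachable and, straight $F_1$-segments being globally minimizing in $Q_1$ by the triangle inequality for $F_1$, we obtain $\Gamma_{t_0} = \tilde{\Gamma}_{t_0}$. For $\tau_\eta < t_0 \le \tau^+$, Lemma~\ref{lem:wavefronts}(i) gives that $\tilde{\Gamma}_{t_0}$ crosses $\eta$ at the two points $\tilde{\Gamma}_{t_0}(\theta_\eta^\pm)$; the back arc ($\theta \notin (\theta_\eta^-,\theta_\eta^+)$) lies in $Q_1$ and stays globally minimizing, while the front arc would lie in $Q_2$ and must be discarded in favour of the refracted wavefront $\Gamma_2$, which exists and sweeps the relevant part of $Q_2$ by Theorem~\ref{th:existence_refraction}. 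The continuity at the seam is the point to check: letting the refracted segment of a refracted trajectory shrink to zero sends its endpoint to the point of $\eta$ hit at time $t_0$ by the incident straight line, which is exactly $\tilde{\Gamma}_{t_0}(\theta_\eta^\pm)$; hence $\Gamma_1$ and $\Gamma_2$ meet there and close up. Since $t_0 \le \tau^+$, no three-segment reflected trajectory has yet had time to deliver a competing point, so $\hat{\Gamma}^+_{t_0}$ contributes nothing.

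The substantive regime is $t_0 > \tau^+$, where the reflected wavefront enters and the cut locus forms. By Lemma~\ref{lem:wavefronts}(ii), $\hat{\Gamma}^+_{t_0}$ is the segment from $\gamma^+(t_0)$ to $\varphi^+(t_0)$; by Proposition~\ref{prop:critical} the endpoint $\gamma^+(t_0)$ lies strictly outside $\tilde{\Gamma}_{t_0}$ while $\varphi^+(t_0)$ lies strictly inside, and by Lemma~\ref{lem:wavefronts}(iii) the segment meets $\tilde{\Gamma}_{t_0}$ in the single point $\tilde{\Gamma}_{t_0}(\theta_0^+) = \hat{\Gamma}^+_{t_0}(s_0^+)$. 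The key dichotomy I would exploit is that $\tilde{\Gamma}_{t_0}$ is \emph{precisely} the $t_0$-level set of the straight-line time, so a point strictly outside it cannot be reached by any $F_1$-segment in time $\le t_0$: for $s \in [0,s_0^+)$ the reflected trajectory therefore realizes the global minimum and these points form $\Gamma_3$, whereas for $s \in (s_0^+,1]$ the point is already interior to the reachable set and drops out. Consequently $\Gamma_3$ cuts off the standard arc between $\theta_\eta^+$ and $\theta_0^+$, which is why the upper cutoff of $\Gamma_1$ moves from $\theta_\eta^+$ to $\theta_0^+$; on the other side $\Gamma_3$ closes onto $\Gamma_2$ at $\gamma^+(t_0) \in \eta$, using once more that the critical trajectory is the tangent limit of refracted trajectories, so that $\gamma^+(t_0)$ is an endpoint of the refracted wavefront.

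The main obstacle is exactly this competition step in the last regime: turning the geometric ``inside/outside $\tilde{\Gamma}_{t_0}$'' alternative into the sharp statement that the globally minimizing route switches from reflected to standard precisely at $\theta_0^+$, and simultaneously matching the three arcs continuously at the two $\eta$-points $\tilde{\Gamma}_{t_0}(\theta_\eta^-)$ and $\gamma^+(t_0)$ and at the crossing $\tilde{\Gamma}_{t_0}(\theta_0^+)$. The inside/outside characterization rests on identifying $\tilde{\Gamma}_{t_0}$ with the level set of the $F_1$-distance, the uniqueness of the crossing on the strong convexity of $\tilde{\Gamma}_{t_0}$ invoked in Lemma~\ref{lem:wavefronts}(iii), and the seam matchings on the two limiting procedures above; assembling these yields that $\Gamma_{t_0} = \Gamma_1 \cup \Gamma_2 \cup \Gamma_3$ is a closed piecewise smooth curve. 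The remaining cases (only $\theta_c^-$, both, or neither critical angle present) then follow verbatim by applying the reflected-wavefront analysis to each critical angle that exists.
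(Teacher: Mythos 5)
Your proposal is correct and follows essentially the same route as the paper's proof: the same three time regimes, the same supporting results (Lemma~\ref{lem:wavefronts}, Theorem~\ref{th:existence_refraction}, Proposition~\ref{prop:critical} and the classification of globally minimizing curves from \S~\ref{subsec:new_reflection}--\ref{subsec:global}), and the same gluing of arcs at $\tilde{\Gamma}_{t_0}(\theta_\eta^-)$, $\gamma^+(t_0)$ and $\tilde{\Gamma}_{t_0}(\theta_0^+)=\hat{\Gamma}^+_{t_0}(s_0^+)$. In fact you are somewhat more explicit than the paper on the competition step in the regime $t_0>\tau^+$ (the inside/outside dichotomy deciding which arcs survive), which the paper compresses into ``it is clear from \S~\ref{subsec:global} and Lemma~\ref{lem:wavefronts}.''
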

\begin{proof}
Note first that $ \Gamma_1 $ and $ \Gamma_3 $ are always smooth curves by \eqref{eq:parametrization} and so is $ \Gamma_2 $ by Theorem~\ref{th:existence_refraction} and the smoothness of the raypath parameter \eqref{eq:raypath}. Now, if $ t_0 \in [0,\tau_\eta] $, only the standard wavefront exists. If $ t_0 \in (\tau_\eta,\tau^+] $, the reflected wavefront still does not exist, so the part of the standard wavefront $ \Gamma_1 $ in $ Q_1 $ globally minimizes the traveltime, and so does the whole refracted wavefront $ \Gamma_2 $ (recall the discussion in \S~\ref{subsec:global}). Also, the endpoints of $ \Gamma_1 $ and $ \Gamma_2 $ coincide, which ensures that $ \Gamma_{t_0} $ is a piecewise smooth closed curve. Finally, if $ t_0 > \tau^+ $, it is clear from \S~\ref{subsec:global} and Lemma~\ref{lem:wavefronts} that each part $ \Gamma_1, \Gamma_2, \Gamma_3 $ globally minimizes the traveltime. In this case, the endpoints of $ \Gamma_1 $ are $ \Gamma_2(\theta_{\eta}^-) $ and $ \Gamma_3(0) $, and $ \Gamma_2(\theta_0^+) = \Gamma_3(s_0^+) $. So $ \Gamma_{t_0} $ is still a piecewise smooth closed curve.
\end{proof}

\begin{figure}
\centering
\subfigure[$ t_0 < \tau_{\eta} $]{\label{fig:wave_a}\includegraphics[width=0.49\textwidth]{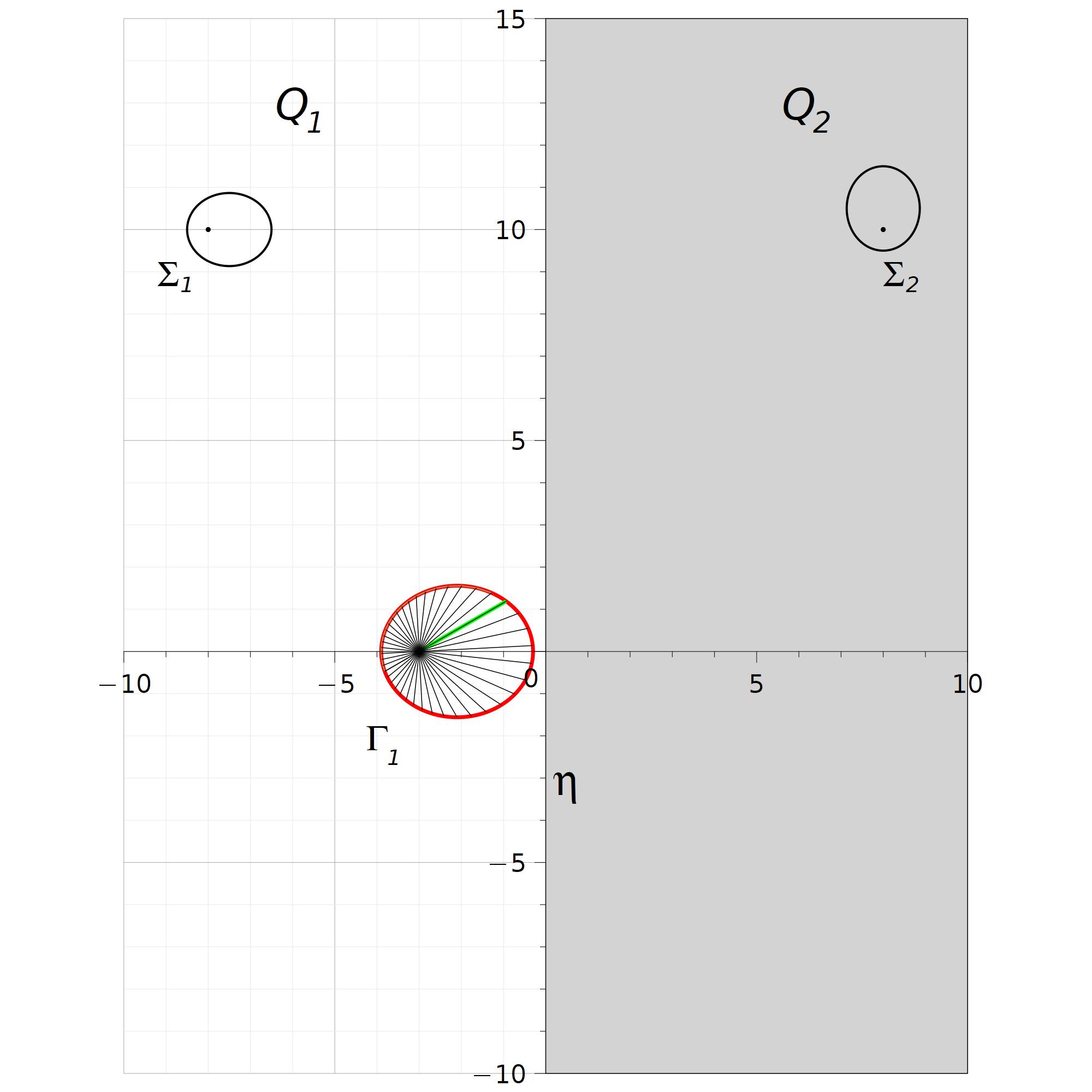}}
\subfigure[$ t_0 = \tau^+ $]{\label{fig:wave_b}\includegraphics[width=0.49\textwidth]{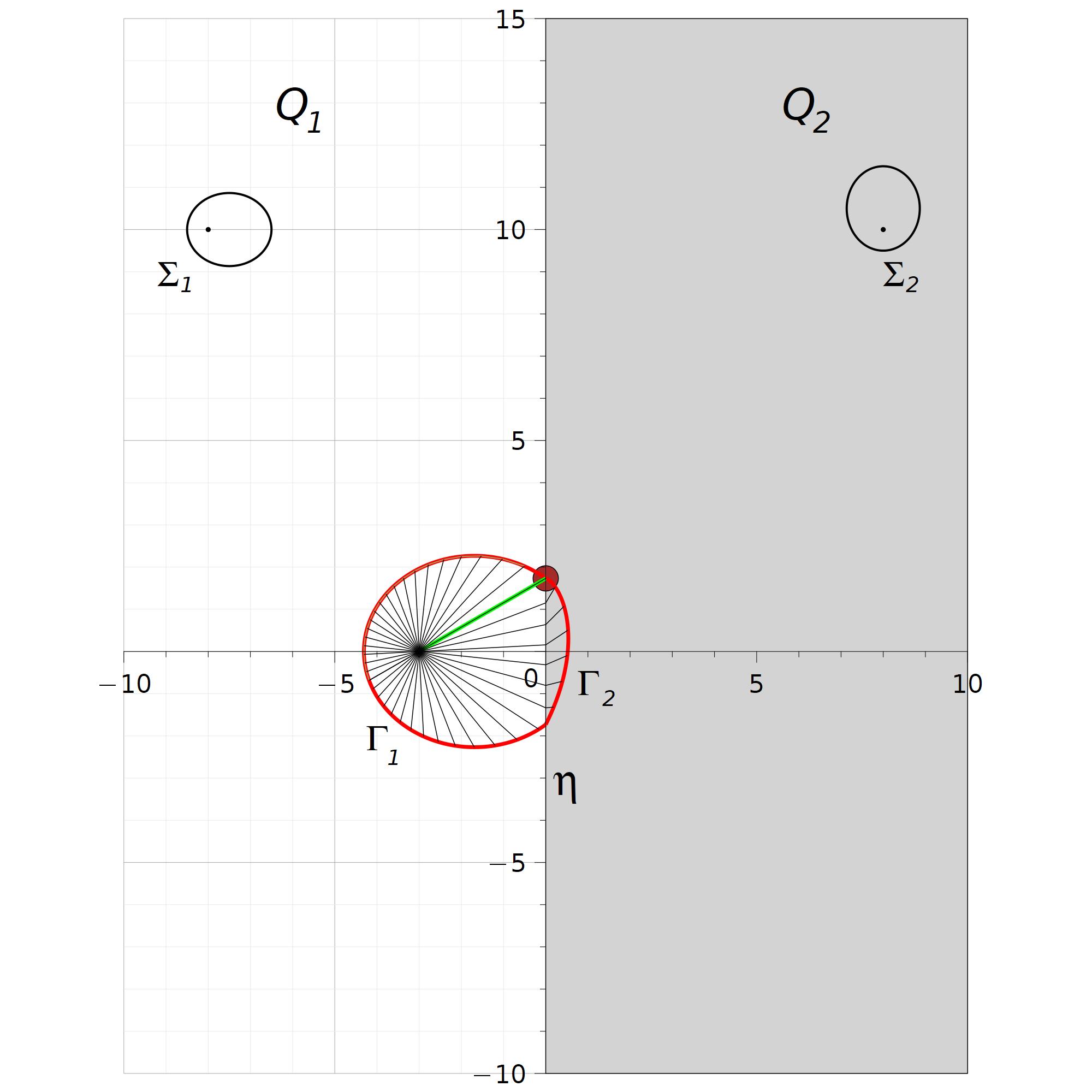}}
\subfigure[$ t_0 > \tau^+ $]{\label{fig:wave_c}\includegraphics[width=0.49\textwidth]{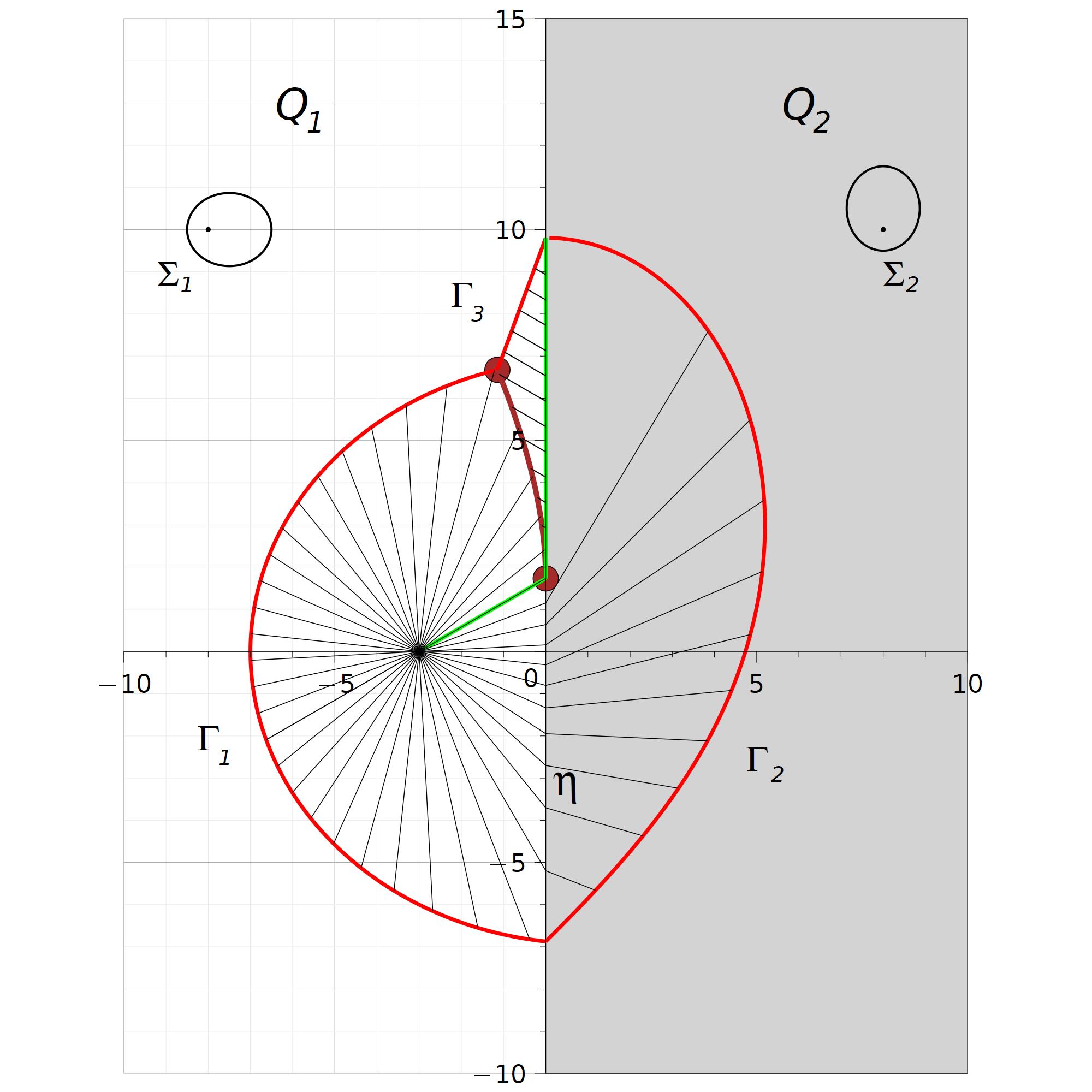}}
\subfigure[$ t_0 > \tau^+ $]{\label{fig:wave_d}\includegraphics[width=0.49\textwidth]{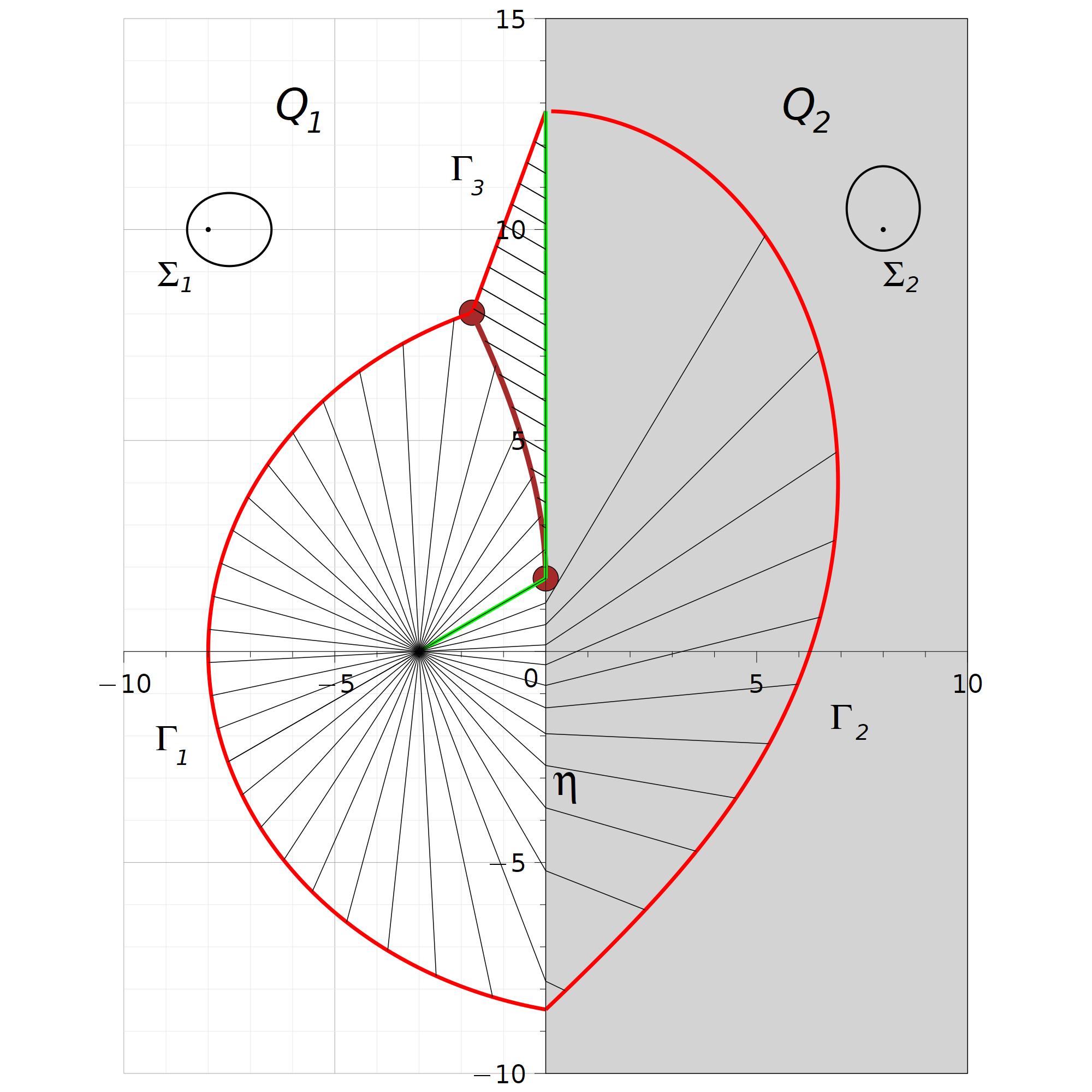}}
\caption{Evolution of the wavefront over time. The cut locus in (c) and (d) given by the intersection of $ \Gamma_1 $ and $ \Gamma_3 $ for $ t \in (\tau^+,t_0] $ develops into $ Q_{1} $ from the point of first encounter of the expanding ellipse with $ \eta $ in the critical direction, as shown in (b). The Finsler metrics are the same as in \S~\ref{subsec:example}.}
\label{fig:wavefront}
\end{figure}

\subsection{The cut locus}
\label{subsec:cutlocus}
Fixing any $ q \in Q $, let us define the following curves:
\begin{equation*}
\begin{array}{cccc}
\sigma^\pm \colon & (\tau^\pm,\infty) & \longrightarrow & \mathds{R}^2 \\
& t & \longmapsto & \sigma^\pm(t) = \tilde{\Gamma}_t \cap \hat{\Gamma}^\pm_t,
\end{array}
\end{equation*}
where $ \tilde{\Gamma}_t, \hat{\Gamma}^\pm_t $ are the standard and reflected wavefronts from $ q $ at time $ t $ and $ \tau^\pm $ is the time at which the critical trajectory $ \gamma^\pm $ reaches $ \eta $. Provided that the critical angles exist, note that each $ \sigma^\pm $ is well defined, as $ \tau^\pm $ is the time from which the reflected wavefront begins to exist and $ \tilde{\Gamma}_t \cap \hat{\Gamma}^\pm_t $ is just one point by Lemma~\ref{lem:wavefronts} (see Figures~\ref{fig:wave_c} and \ref{fig:wave_d}).

By definition, each point of these curves is reached at the same time by two different globally minimizing trajectories from $ q $, constituting what is known as the {\em cut locus} of $ q $:
\begin{equation*}
\mathcal{C}_{q} := \text{Im}(\sigma^+) \cup \text{Im}(\sigma^-).
\end{equation*}

\begin{prop}
Fix $ q \in Q $ and let $ \mathcal{M}_q $ be the set of all piecewise smooth curves $ \gamma: [0,\infty) \rightarrow Q $ (and thus, future inextendible) with $ \gamma(0) = q $. If we define the map $ t_q: \mathcal{M}_q \rightarrow (0,\infty] $ by
\begin{equation*}
t_q(\gamma) := \textup{sup}\{ t \in \mathds{R}: \gamma|_{[0,t]} \text{ globally minimizes the traveltime} \},
\end{equation*}
then the cut locus of $ q $ is
\begin{equation}
\label{eq:cut_locus}
\mathcal{C}_q = \{ \gamma(t_q(\gamma)) \in \mathds{R}^2: \gamma \in \mathcal{M}_q, 0 < t_q(\gamma) < \infty \}.
\end{equation}
\end{prop}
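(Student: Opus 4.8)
We prove the set equality \eqref{eq:cut_locus} by a double inclusion, working throughout in the representative case of Theorem~\ref{th:wavefront} (where $\theta_c^+$ exists but $\theta_c^-$ does not; the remaining cases are analogous), so that $\mathcal{C}_q=\text{Im}(\sigma^+)$. A preliminary reduction: since the global minimality of $\gamma|_{[0,s]}$ is a property of the unparametrized trajectory, neither the cut point $\gamma(t_q(\gamma))$ nor the conditions $0<t_q(\gamma)<\infty$ change if we replace each $\gamma\in\mathcal{M}_q$ by its time parametrization. Hence we may assume all curves are time-parametrized, so the parameter equals the traveltime and $t_q(\gamma)=d(q,\gamma(t_q(\gamma)))$ whenever the supremum is finite, where $d$ denotes the traveltime distance. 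We also recall from \S~\ref{subsec:wavefront} that a time-parametrized curve can globally minimize only if it is one of three types: a straight line in $Q_1$, a refracted trajectory (possibly the critical one running along $\eta$), or a three-segment reflected trajectory.

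For the inclusion $\text{Im}(\sigma^+)\subseteq$ (right-hand side of \eqref{eq:cut_locus}), fix $t>\tau^+$ and set $p=\sigma^+(t)=\tilde{\Gamma}_t\cap\hat{\Gamma}^+_t$. By Lemma~\ref{lem:wavefronts} and \eqref{eq:parametrization}, $p=\tilde{\Gamma}_t(\theta_0^+)$ is the endpoint of the straight-line piece $\Gamma_1$ and $p=\hat{\Gamma}^+_t(s_0^+)$ the endpoint of the reflected piece $\Gamma_3$, with $s_0^+\in(0,1)$ since $\gamma^+(t)$ lies strictly outside $\tilde{\Gamma}_t$. Thus $p$ is reached at the common time $t$ by a straight line $\xi$ and by a genuine (bent) three-segment reflected trajectory $\rho$, both globally minimizing as they lie on $\Gamma_1$ and $\Gamma_3$ respectively. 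Since $\xi$ and $\rho$ are distinct trajectories arriving at $p$ with distinct velocities, $\xi$ cannot globally minimize beyond $p$: if $\xi|_{[0,t+\delta]}$ were minimizing, the concatenation of $\rho|_{[0,t]}$ with $\xi|_{[t,t+\delta]}$ would have the same traveltime yet a genuine corner at $p$, which by the strict triangle inequality (strong convexity of the indicatrices; cf.\ \cite[Thm.~1.2.2]{BCS}) could be shortened, contradicting minimality. Hence $t_q(\xi)=t$ and $\xi(t_q(\xi))=p$, so $p$ belongs to the right-hand side of \eqref{eq:cut_locus}.

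Conversely, let $p=\gamma(t_q(\gamma))$ with $t:=t_q(\gamma)\in(0,\infty)$. The predicate ``$\gamma|_{[0,s]}$ globally minimizes'' is closed in $s$ by continuity of $T$ and of $s\mapsto d(q,\gamma(s))$, so the supremum is attained: $\gamma|_{[0,t]}$ globally minimizes the traveltime to $p$, while $\gamma|_{[0,t']}$ does not for any $t'>t$. By the trichotomy above, $\gamma|_{[0,t]}$ cannot be refracted or critical, for refracted trajectories are the unique globally minimizing ones to each point of $Q_2$ (Theorem~\ref{th:existence_refraction}) and critical trajectories globally minimize along $\eta$ for all times (Proposition~\ref{prop:critical}), which would give $t_q(\gamma)=\infty$. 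Therefore $\gamma|_{[0,t]}$ is a straight line or a three-segment reflected trajectory and $p\in\overline{Q}_1$. Now, since the geodesics here are straight lines, there are no conjugate points, so failure to minimize past $t$ must be caused by the arrival of a \emph{second} globally minimizing trajectory at $p$ at time $t$: taking $t_n\downarrow t$, each $\gamma(t_n)$ admits a minimizer $\alpha_n$ with $T[\alpha_n]=d(q,\gamma(t_n))<t_n$, and since the three families are parametrized by bounded angles, a subsequence converges to a minimizer $\alpha$ to $p$ with $T[\alpha]=t$; were $\alpha=\gamma|_{[0,t]}$, the absence of conjugate points would force $\gamma$ to keep minimizing past $t$, a contradiction, so $\alpha\neq\gamma|_{[0,t]}$. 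Thus $p\in\overline{Q}_1$ is reached by two distinct minimizers at time $t$. By Theorem~\ref{th:wavefront} the only competition among $Q_1$-points is between the standard wavefront $\tilde{\Gamma}_t$ and the reflected wavefront $\hat{\Gamma}^+_t$, so one minimizer lies on $\tilde{\Gamma}_t$ and the other on $\hat{\Gamma}^+_t$; hence $p\in\tilde{\Gamma}_t\cap\hat{\Gamma}^+_t=\{\sigma^+(t)\}$ by Lemma~\ref{lem:wavefronts}, i.e.\ $p\in\text{Im}(\sigma^+)=\mathcal{C}_q$.

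The main obstacle is the converse inclusion, and within it the step identifying the cut as a two-minimizer cut and locating its competitor. One must (i) attain the supremum and establish the trichotomy of minimizing types, (ii) rule out that $\gamma$ stops minimizing for a ``focal'' reason, which here is handled by the absence of conjugate points for straight-line geodesics and is exactly what makes the limit trajectory $\alpha$ genuinely distinct from $\gamma|_{[0,t]}$, and (iii) invoke Theorem~\ref{th:wavefront} and Lemma~\ref{lem:wavefronts} to pin the second minimizer on the reflected wavefront and thereby place $p$ on $\sigma^+$. Verifying $\alpha\neq\gamma|_{[0,t]}$ rigorously is the genuinely delicate point, as it is the only place where flatness of the ambient geometry is essential; the remaining arguments are bookkeeping across the three trajectory types.
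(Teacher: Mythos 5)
Your forward inclusion is essentially the paper's own argument (two minimizers meeting at $\sigma^+(t)$, a concatenation with a genuine corner at $p$, and the strict triangle inequality), so there is nothing to flag there. The genuine gap is in the converse inclusion, at exactly the step you yourself single out as delicate: ruling out $\alpha=\gamma|_{[0,t]}$. Your justification -- ``the geodesics here are straight lines, so there are no conjugate points, and the absence of conjugate points would force $\gamma$ to keep minimizing past $t$'' -- is not a proof in this setting. First, conjugate points are never defined for this discontinuous two-metric structure, and no result in the paper (or cited by it) links their absence to prolongation of global minimization; moreover the competing family of minimizers consists not only of straight lines but also of broken three-segment reflected trajectories, so ``straight lines do not focus'' does not even address the relevant family. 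Second, and more decisively, $\gamma\in\mathcal{M}_q$ is an \emph{arbitrary} piecewise smooth curve beyond $t=t_q(\gamma)$: the points $\gamma(t_n)$ you feed into the compactness argument need not lie on any trajectory extension of $\gamma|_{[0,t]}$, so nothing can ``force $\gamma$ to keep minimizing past $t$'' even when $\gamma|_{[0,t]}$ is the unique minimizer to $p$ (consider a minimizing straight segment that simply doubles back at time $t$). The standard Riemannian cut-locus dichotomy you are importing requires the approximating points to lie along the geodesic continuation of $\gamma$, which is precisely what is unavailable here.

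What the paper does instead at this point -- and what your argument is missing -- is a concrete uniqueness-within-family fact: all three-segment reflected trajectories share the same first-segment direction (the critical angle $\theta_c^+$) and the same last-segment direction (its reflection), so their last segments are parallel lines, and the only reflected trajectory through a point on the extension of $\gamma$'s last segment is $\gamma$ itself. Hence the minimizers to points just beyond $p$ along the extended trajectory must be straight lines (they cannot be refracted, those points lying in $Q_1$, nor reflected, by parallelism), and their limit as $\delta\rightarrow 0$ is a straight-line minimizer to $p$, automatically distinct from the broken curve $\gamma|_{[0,t]}$. This elementary observation is what actually pins $p$ on $\tilde{\Gamma}_t\cap\hat{\Gamma}^+_t$, and it is the correct replacement for your conjugate-point appeal. (Note that both you and the paper implicitly identify $\gamma$ with the trajectory extension of its minimizing segment -- your trichotomy step ``refracted $\Rightarrow t_q(\gamma)=\infty$'' makes the same identification as the paper's -- so that particular idealization is not a defect specific to your write-up; the unsupported conjugate-point step is.)
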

\begin{proof}
Let $ p = \tilde{\Gamma}_{t_0} \cap \hat{\Gamma}^\pm_{t_0} \in \mathcal{C}_q $ for any $ t_0 > \tau^\pm $. There exist two (time-parametrized) curves $ \xi, \gamma \in \mathcal{M}_q $ such that $ \xi(t_0) = \gamma(t_0) = p $ and $ \xi_{[0,t_0]}, \gamma_{[0,t_0]} $ globally minimize the traveltime. Suppose $ \gamma|_{[0,t_0+\delta]} $ is still globally time-minimizing for some $ \delta > 0 $. Then the concatenation of $ \xi_{[0,t_0]} $ with $ \gamma|_{[t_0,t_0+\delta]} $ also globally minimizes the traveltime. But if we concatenate $ \xi_{[0,t_0-\delta]} $ with the straight line joining $ \xi(t_0-\delta) $ and $ \gamma(t_0+\delta) $ we reduce the traveltime by the triangle inequality (note that $ \gamma $ and $ \xi $ cannot be parallel), which is a contradiction. So $ t_0 = t_q(\gamma) $ and $ p $ is in the set \eqref{eq:cut_locus}.

Conversely, let $ p = \gamma(t_q(\gamma)) $ for some $ \gamma \in \mathcal{M}_q $ with $ 0 < t_q(\gamma) < \infty $. Since $ \gamma|_{[0,t_q(\gamma)]} $ globally minimizes the traveltime, $ \gamma $ is either a refracted trajectory, a straight line or a three-segment reflected trajectory. Now, if it were a refracted trajectory, then $ t_q(\gamma) = \infty $, which is a contradiction, so assume without loss of generality that $ \gamma(t_q(\gamma)) \in \hat{\Gamma}^+_{t_q(\gamma)} $. For each $ \delta > 0 $, the global time-minimizing curve from $ q $ to $ \gamma(t_q(\gamma)+\delta) $ must be the straight line joining both points (it cannot be a tree-segment reflected trajectory because its last segment would not intersect $ \gamma $) and in the limit $ \delta \rightarrow 0 $, we find the (time-parametrized) straight line $ \xi $ from $ q $ to $  \gamma(t_q(\gamma)) $ satisfying $ \xi(t_q(\gamma)) \in \tilde{\Gamma}_{t_q(\gamma)} $. So $ p = \gamma(t_q(\gamma)) \in \tilde{\Gamma}_{t_q(\gamma)} \cap \hat{\Gamma}^+_{t_q(\gamma)} \in \mathcal{C}_q $.
\end{proof}

Essentially, any wavefront trajectory crossing one of the curves $ \sigma^\pm $ loses the property of being globally time-minimizing, no longer providing points in the wavefront afterwards. In practice, $ \sigma^\pm $ represent the merging regions of two wavefronts coming from different directions, becoming extremely relevant in some situations. When dealing with wildfires, for instance, these curves are danger zones for the firefighters, as they can easily become trapped by the fire and the convergence of several fire trajectories leads to the corresponding increase in the heat intensity (see also the discussion in \cite{JPS2}).

\section*{Acknowledgments}
The authors warmly thank the anonymous referee, whose accurate comments helped improve the manuscript.

This work is the result of a research stay of EPR at DTU Compute, partially supported by The International Doctoral School of the University of Murcia (EIDUM) and by Ayudas para la Formaci\'{o}n de Profesorado Universitario (FPU) from the Spanish Government.

EPR was supported by the project PID2021-124157NB-I00, funded by MCIN/AEI/10.13039/501100011033/ and by "ERDF A way of making Europe", and he was also supported by Ayudas a proyectos para el desarrollo de investigaci\'{o}n cient\'{i}fica y t\'{e}cnica por grupos competitivos (Comunidad Aut\'{o}noma de la Regi\'{o}n de Murcia), included in the Programa Regional de Fomento de la Investigaci\'{o}n Cient\'{i}fica y T\'{e}cnica (Plan de Actuaci\'{o}n 2022) of the Fundaci\'{o}n S\'{e}neca-Agencia de Ciencia y Tecnolog\'{i}a de la Regi\'{o}n de Murcia, REF. 21899/PI/22.


\begin{thebibliography}{10}
\bibitem{ABS}
	{\sc P.~L.~Antonelli, A.~B\'{o}na and M.~A.~Slawi\'{n}ski.}
	\newblock Seismic rays as Finsler geodesics.
	\newblock {\em Nonlinear Anal. RWA} 4 (5), 711--722 (2003).
	
\bibitem{BCS}
	{\sc D.~Bao, S.-S.~Chern and Z.~Shen.}
	\newblock {\em An Introduction to Riemann-Finsler geometry}.
	\newblock Graduate Texts in Mathematics, vol. 200, Springer, New York, NY, 2000.
	
\bibitem{BS}
	{\sc A.~B\'{o}na and M.~A.~Slawi\'{n}ski.}
	\newblock Raypaths as parametric curves in anisotropic, nonuniform media: differential-geometry approach
	\newblock {\em Nonlinear Analysis} 51, 983--994 (2002).

\bibitem{CJM}
	{\sc E.~Caponio, M.~\'{A}.~Javaloyes and A.~Masiello.}
	\newblock On the energy functional on Finsler manifolds and applications to stationary spacetimes.
	\newblock {\em Math. Ann.} 351, 365--392 (2011).
	
\bibitem{CJS1}
	{\sc E.~Caponio, M.~\'{A}.~Javaloyes and M.~S\'{a}nchez.}
	\newblock On the interplay between Lorentzian causality and Finsler metrics of Randers type.
	\newblock {\em Rev. Mat. Iberoam.} 27 (3), 919--952 (2011).
	
\bibitem{CJS2}
	{\sc E.~Caponio, M.~\'{A}.~Javaloyes and M.~S\'{a}nchez.}
	\newblock Wind Finslerian structures: from Zermelo's navigation to the causality of spacetimes.
	\newblock {\em ArXiv e-prints}, arXiv:1407.5494 [math.DG] (2014). To appear in {\em Memoirs of AMS}.
	
\bibitem{D}
	{\sc H.~R.~Dehkordi.}
	\newblock Applications of Randers geodesics for wildfire spread modelling.
	\newblock {\em Applied Mathematical Modelling} 106, 45--59 (2022).
	
\bibitem{DS}
	{\sc H.~R.~Dehkordi and A.~Saa.}
	\newblock Huygens' envelope principle in Finsler spaces and analogue gravity.
	\newblock {\em Classical Quantum Gravity} 36 (8), 085008 (2019).

\bibitem{FMOW}
	{\sc R.~C.~Fetecau, J.~E~Marsden, M.~Ortiz and M.~West.}
	\newblock Nonsmooth Lagrangian Mechanics and Variational Collision Integrators.
	\newblock {\em SIAM J. Appl. Dyn. Syst.} 2 (3), 381--416 (2019).

\bibitem{F}
	{\sc M.~A.~Finney.}
	\newblock {\em FARSITE: Fire Area Simulator-model development and evaluation.}
	\newblock Res. Pap. RMRS-RP-4, USDA Forest Service, Rocky Mountain Research Station, Ogden, UT, 1998 (revised 2004).

\bibitem{G}
	{\sc G.~W.~Gibbons.}
	\newblock A Spacetime Geometry picture of Forest Fire Spreading and of Quantum Navigation.
	\newblock {\em ArXiv e-prints}, arXiv:1708.02777 [gr-qc] (2017).

\bibitem{GHWW}
	{\sc G.~W.~Gibbons, C.~A.~R.~Herdeiro, C.~M.~Warnick and M.~C.~Werner.}
	\newblock Stationary metrics and optical Zermelo-Randers-Finsler geometry.
	\newblock {\em Phys. Rev. D} 79, 044022 (2009).

\bibitem{GW}
	{\sc G.~W.~Gibbons and C.~M.~Warnick.}
	\newblock The geometry of sound rays in a wind.
	\newblock {\em Contemp. Phys.} 52 (3), 197--209 (2011).
	
\bibitem{JP}
	{\sc M.~\'{A}.~Javaloyes and E.~Pend\'{a}s-Recondo.}
	\newblock Lightlike Hypersurfaces and Time-Minimizing Geodesics in Cone Structures.
	\newblock In: A.~L.~Albujer, M.~Caballero, A.~Garc\'{i}a-Parrado, J.~Herrera and R.~Rubio (eds.), {\em Developments in Lorentzian Geometry}, Springer Proceedings in Mathematics \& Statistics, vol. 389, Springer Nature Switzerland AG, Cham, Switzerland, 2022.
	
\bibitem{JPS1}
	{\sc M.~\'{A}.~Javaloyes, E.~Pend\'{a}s-Recondo and M.~S\'{a}nchez.}
	\newblock Applications of cone structures to the anisotropic rheonomic Huygens' principle.
	\newblock {\em Nonlinear Analysis} 209, 112337 (2021).

\bibitem{JPS2}
	{\sc M.~\'{A}.~Javaloyes, E.~Pend\'{a}s-Recondo and M.~S\'{a}nchez.}
	\newblock A general model for wildfire propagation with wind and slope.
	\newblock {\em ArXiv e-prints}, arXiv:2110.03364 [math.DG] (2021). To appear in {\em SIAM Journal on Applied Algebra and Geometry}.
	
\bibitem{JS1}
	{\sc M.~\'{A}.~Javaloyes and M.~S\'{a}nchez.}
	\newblock On the definition and examples of Finsler metrics.
	\newblock {\em Ann. Sc. Norm. Super. Pisa Cl. Sci.} 13 (3), 813--858 (2014).
	
\bibitem{JS2}
	{\sc M.~\'{A}.~Javaloyes and M.~S\'{a}nchez.}
	\newblock On the definition and examples of cones and Finsler spacetimes.
	\newblock {\em RACSAM} 114, 30 (2020).
	
\bibitem{M1}
	{\sc S.~Markvorsen.}
	\newblock A Finsler geodesic spray paradigm for wildfire spread modelling.
	\newblock {\em Nonlinear Anal. RWA} 28, 208--228 (2016).

\bibitem{M2}
	{\sc S.~Markvorsen.}
	\newblock Geodesic sprays and frozen metrics in rheonomic Lagrange manifolds.
	\newblock {\em ArXiv e-prints}, arXiv:1708.07350 [math.DG] (2017).
	
\bibitem{YN}
	{\sc T.~Yajima and H.~Nagahama.}
	\newblock Finsler geometry of seismic ray path in anisotropic media.
	\newblock {\em Proc. R. Soc. A} 469, 1763--1777 (2009).
\end{thebibliography}
\end{document}